  \crefname{section}{Section}{Sections}
  \crefname{figure}{Figure}{Figures}
  \crefname{theorem}{Theorem}{Theorems}
  \crefname{lemma}{Lemma}{Lemmas}
  \crefname{proposition}{Proposition}{Propositions}  
  \crefname{corollary}{Corollary}{Corollaries}
  \crefname{definition}{Definition}{Definitions}
  \crefname{example}{Example}{Examples}
  \crefname{remark}{Remark}{Remarks}
\newtheorem{theorem}{Theorem}[section]
\newtheorem{lemma}[theorem]{Lemma}
\newtheorem{proposition}[theorem]{Proposition}
\newtheorem{corollary}[theorem]{Corollary}
\newtheorem{definition}[theorem]{Definition}
\newtheorem{remark}[theorem]{Remark}
\newtheorem{condition}{Condition}
\numberwithin{equation}{section}
\DeclarePairedDelimiter\abs{\lvert}{\rvert}
\DeclarePairedDelimiter\norm{\lVert}{\rVert}
\newcommand*{\defeq}{\mathrel{\mathop:}=}
\newcommand*{\ubar}[1]{\underaccent{\bar}{#1}}
\newcommand*{\dist}{\operatorname{dist}}
\newcommand*{\crad}{\operatorname{crad}}
\newcommand{\R}{\ensuremath{\mathbf{R}}}
\newcommand{\E}{\ensuremath{\mathbf{E}}}
\renewcommand{\P}{\ensuremath{\mathbf{P}}}
\newcommand*{\pr}{\mathbf{P}}
\newcommand*{\ex}{\mathbf{E}}
\newcommand*{\NN}{\mathbf{N}}
\newcommand*{\RR}{\mathbf{R}}
\newcommand*{\HH}{\mathbf{H}}
\title{On Loewner chains driven by semimartingales and
complex Bessel-type SDEs}
\author{
Vlad Margarint
\thanks{UNC Charlotte.
    Email: \url{vmargari@charlotte.edu}} 
\and
Atul Shekhar%
  \thanks{TIFR-CAM, Bangalore.
    Email: \url{atul@tifrbng.res.in}}
\and
Yizheng Yuan
\thanks{U Cambridge and TU Berlin.
Email: \url{yy547@cam.ac.uk}}
}
\begin{document}

\maketitle

\begin{abstract}
We prove existence (and simpleness) of the trace for both forward and backward Loewner chains under fairly general conditions on semimartingale drivers. As an application, we show that stochastic Komatu-Loewner evolutions SKLE$_{\alpha,b}$ are generated by curves. As another application, motivated by a question of A. Sep\'{u}lveda, we show that for $\alpha >3/2$ and Brownian motion $B$, the driving function $|B_t|^\alpha$ generates a simple curve for small $t$. On a related note we also introduce a complex variant of Bessel-type SDEs and prove existence and uniqueness of strong solution. Such SDEs appear naturally while describing the trace of Loewner chains. In particular, we write SLE$_\kappa$, $\kappa <4$, in terms of stochastic flow of such SDEs.  
\end{abstract}

\section{Introduction} \label{intro}
The Loewner theory is a key ingredient in the construction of Schramm-Loewner evolutions (SLEs), see \cite{law-conformal-book} and references therein for a detailed introduction to SLEs and its numerous applications in statistical mechanics. A Loewner chain is a family $\{g_t\}_{t \in [0,T]}$ of conformal maps $g_t\colon \mathbf{H}\setminus K_t \to \mathbf{H}$, where $\mathbf{H}$ is the upper half plane and the family $\left\{K_{t}\right\}_{t \in[0, T]}$ are subsets of $\mathbf{H}$ satisfying a certain local growth property, see e.g. \cite[Section 7]{bn-sle-lecture} for a precise definition. Such families $\left\{K_{t}\right\}_{t \in[0, T]}$ are in a one-to-one correspondence with real-valued continuous functions $\left\{W_{t}\right\}_{t \in [0, T]}$ which we refer to as its \textit{driving function} or simply its \textit{driver}. When $W_t$ is chosen to be $\sqrt{\kappa}B_t$, where $B_t$ is a standard Brownian motion, it gives rise to the SLE$_{\kappa}$ curves. More precisely, it was proven in \cite{rs-sle,lsw-lerw-ust} that the Loewner chain driven by $\sqrt{\kappa} B_{t}$ is generated by a continuous curve $\gamma^{\kappa}\colon [0,T] \to \overline{\mathbf{H}}$, i.e. for $t >0$, $\mathbf{H} \setminus K_t$ is the unbounded component of $\mathbf{H}\setminus \gamma[0,t]$. The curve $\gamma^{\kappa}$ is defined as SLE$_{\kappa}$. It is a simple curve if and only if $\kappa \leq 4.$ In this article we ask ourselves what happens if we replace Brownian motion by continuous semimartingales (such a study was initiated in \cite{fs-le-rp} and it is further extended here). More precisely, we ask the following questions:

\begin{enumerate}
\item Are Loewner chains driven by semimartingales generated by a curve?

\item When are these curves simple?
\end{enumerate}

As a motivation for considering semimartingale drivers and asking above questions, we show in Section \ref{app-sec} that stochastic Komatu-Loewner evolutions (which are variants of SLEs in finitely connected domains) are generated by curves. We also illustrate in Section \ref{app-sec} other prospective applications.

\smallskip

We remind ourselves that there are two possible ways of generating sets $K_t$ using the driver $W_t$: either by using the forward Loewner differential equation (LDE) which describes the evolution of conformal maps $g_t\colon \mathbf{H}\setminus K_t \to \mathbf{H}$ or by using the backward LDE which describes the evolution of $g_{T-t}\circ g_T^{-1}\colon \mathbf{H} \to \mathbf{H}\setminus g_{T-t}(K_T)$. The forward LDE is driven by $W_t$, and the backward LDE is driven by the time reversal $U$ of $W$, i.e. $U_t = W_T - W_{T-t}$. In the case of SLE where the driver is a Brownian motion, since Brownian motion is time reversible, addressing above questions using the forward or the backward LDE are equivalent. However, if we want to replace Brownian motion by continuous semimartingales, since time reversal of a semimartingale need not be a semimartingale, we have to distinguish between the forward and the backward case. We therefore consider two scenarios: (i) $W_t$ is a semimartingale, and, (ii) when $U_t$ is a semimartingale. We ask in each scenarios questions (a)-(b). The case of the forward semimartingale corresponds to growing a random curve from inside that changes its $\kappa$ parameter according to its past. The backward LDE, on the other hand, can be interpreted as a conformal welding process that changes its $\kappa$ parameter according to the previous welding (cf. \cite{rz-backward-sle,she-quantum-zipper}). The main result of this article gives answers to the above questions under fairly general conditions on the semimartingale driver.

More precisely, we consider semimartingales satisfying the following conditions. Let $T>0$ and $(\Omega, \{\mathcal{F}_t\}_{t\in [0,T]}, \mathcal{F}, \P)$ be a filtered probability space satisfying the usual hypothesis and let $S_t$ be a continuous semimartingale defined on it. We write $S_t = M_t + A_t $, where $M$ is a local martingale and $A$ is a bounded variation process. For the sake of simplicity we also assume that the filtration $\{\mathcal{F}_t\}_{t\in[0,T]}$ is rich enough to support a Brownian motion on it (results of this paper are valid even without this assumption and we have assumed this just to avoid some cumbersome notations). It follows using the martingale representation theorem that $M_t  = \int_0^t \sqrt{\kappa_s}dB_s$ for some Brownian motion $B$ and an adapted process $\kappa_s$ (we are therefore defining SLEs with non-constant $\kappa$ which can itself be random). Our assumption is as follows. 

\begin{condition}\label{sem-cond}
Let $S_t = M_t+A_t$ be a semimartingale as above where $M_t = \int_0^t \sqrt{\kappa_s}\,dB_s$. Suppose
\begin{enumerate}[label= (\roman*)]

\item $\kappa_s \in [\ubar\kappa, \bar\kappa]$ for some constants $\ubar\kappa,\bar\kappa$ that are either $0= \ubar\kappa\leq \bar\kappa <8$ or $8 < \ubar\kappa \leq \bar\kappa < \infty$.

\item $A \in W^{1,2}[0,T]$, i.e.\ $\dot{A}_t = dA_t/dt \in L^2[0,T]$ almost surely.

\end{enumerate}

\end{condition}

Our main results are the following theorems.

\begin{theorem}\label{main-thm-1}
If $U$ is a semimartingale satisfying Condition \ref{sem-cond}, then the Loewner chain with the driver $W$ given by $W_t = U_T - U_{T-t}$, $t \in [0,T]$, is almost surely generated by a curve $\gamma$. Furthermore, if $\bar\kappa <4$, then $\gamma$ is almost surely simple and $\gamma_t \in \mathbf{H}$ for $t \in {(0,T]}$.
\end{theorem}

\begin{theorem}\label{main-thm-2}
If $W$ is a semimartingale satisfying Condition \ref{sem-cond}, then the Loewner chain with the driver $W$ is almost surely generated by a curve $\gamma$. Furthermore, if either $\bar\kappa \leq 4$ with $A\equiv 0$ or $\bar\kappa <4$ with possibly non-zero $A$, then $\gamma$ is almost surely simple and $\gamma_t \in \mathbf{H}$, $\forall t>0$. 
\end{theorem}

\begin{corollary}\label{imp-cor}
If either $U_t = |B_t|^\alpha$ or $W_t = |B_t|^\alpha$ for some $\alpha > 3/2$, then the Loewner chain driven by $W$ is a.s.\ generated by a curve $\gamma$ for $t < \inf\bigl\{ s \ge 0 \bigm| |B_s|^{\alpha-1} \geq \alpha^{-1}\sqrt{8} \bigr\}$. Moreover, $\gamma$ is a simple curve for $t < \inf\bigl\{ s \ge 0 \bigm| |B_s|^{\alpha-1} \geq \alpha^{-1}\sqrt{4} \bigr\}$.
\end{corollary}

\smallskip

Some remarks are in order. 

\begin{itemize}

\item The existence of $\gamma$ in Theorem \ref{main-thm-1} for the special case $\bar\kappa <2$ was proven in \cite{fs-le-rp}. 

\item The distinguishing feature of our proofs as compared to proofs in \cite{rs-sle} is that Theorem \ref{main-thm-1} is solely based on backward flow analysis and Theorem \ref{main-thm-2} is solely based on forward flow analysis.\footnote{The proof of existence of $\gamma$ in \cite{rs-sle} is based on the backward flow analysis and the proof of simpleness of $\gamma$ is based on the forward flow analysis.} While the existence part in Theorem \ref{main-thm-1} is an adaptation of the argument in \cite{rs-sle} (together with an additional observation, see Section \ref{sketch-proof} below; we also prove an uniform estimate for $\bar\kappa <4$, see \eqref{keybound}, \eqref{theta exists} below), the simpleness part, being based on the backward flow analysis, is new to this article and substantially different (and long to our own surprise) from \cite{rs-sle}. On the other hand, the simpleness part in Theorem \ref{main-thm-2} is an adaptation of the argument in \cite{rs-sle}, but existence part, being based on the forward flow analysis, is new to this article. This proof is interesting in its own right and it gives a new proof of the existence of $\gamma$ for SLE$_\kappa$. Furthermore, this idea can be further extended to obtain refined (variation and Hölder-type) regularity estimates for SLE$_\kappa$ that include and add logarithmic refinements to the results in \cite{vl-sle-hoelder,ft-sle-regularity}, see \cite{yuan-psivarx} for details. Another application in the context of L\'{e}vy process driven Loewner chains is given in \cite{ps-le-levyx}.

\item The Condition \ref{sem-cond} on $A$ might suggest a role of Girsanov Theorem in our proofs, but this is rather not the case since we do not assume a lower bound away from zero on $\kappa_s$. The finite energy drivers are special in this context because of totally different reasons, see \cite{wang-energy-reversibility,wang-energy-equivalent,vw-welding-flow} for a detailed study of such drivers and their special properties. 
\item Theorem \ref{main-thm-1} and Theorem \ref{main-thm-2} are different from each other in general situations. But, one can deduce one result from the other when the semimartingale is reversible. This itself consists a large class of stochastic processes e.g. many diffusion processes, see \cite[Chapter 6]{pro-stochastic-book}, \cite{my-enlargement-book} for the related \textit{expansion of filtration} technique. 

\end{itemize}

\medskip

Techniques developed for the proof of Theorem \ref{main-thm-1} and Theorem \ref{main-thm-2} also allows us to prove the following additional result which provides a conceptual clarification in the story of existence of $\gamma$. It was proven in \cite{rs-sle} that the curve $\gamma$ exists if and only if $\lim _{y \rightarrow 0+} f_{t}\left(i y+W_{t}\right)$ exists and the limit is continuous in $t$ (and we will in fact use this criteria to prove the above results). In such cases the curve $\gamma$ is given by $\gamma_t = \lim _{y \rightarrow 0+} f_{t}\left(i y+W_{t}\right)$. It is then natural to ask if it is possible to identify the limit $\lim _{y \rightarrow 0+} f_{t}\left(i y+W_{t}\right)$ in terms of a canonical intrinsically defined object. We achieve this goal in terms of certain Bessel-type SDEs as follows. It is well known that that a description of $f_t(iy+W_t)$ can be obtained by solving the backward LDE started from $iy$, see \eqref{f=h} below. It is therefore natural to consider backward LDE started from zero. But the backward LDE started from zero is a singular equation, and it is a priori not well-defined. To get around this issue we follow the approach of \cite[Chapter 11]{ry-stochastic-book} which deals with a similar situation while making sense of Bessel processes starting from zero. More precisely, let $V_t$ be a semimartingale satisfying Condition \ref{sem-cond} and $h_t(iy)$ be the solution to the backward LDE
\begin{align}\label{Eq1}
d h_{t}=d V_{t}-\frac{2}{h_{t}} d t, \hspace{2mm} h_0=iy.
\end{align}

When $h_0 =0$, the idea is to consider the squared equation and then define $h_t(0)$ by taking square root. But since we expect these solutions to be complex-valued, we have to work with the complex square root function which can be multivalued. This prompts us to make the following definition. Let  
$$\sqrt{z}=\operatorname{sgn}(\operatorname{Im}(z)) \sqrt{\frac{|z|+\operatorname{Re}(z)}{2}}+i \sqrt{\frac{|z|-\operatorname{Re}(z)}{2}}$$ which is a bijection from $\mathbf{C}\setminus [0, \infty)$ to $\mathbf{H}$. Let $\varphi\colon[0,T] \to \mathbf{C}$ be a continuous function.

\begin{definition} For a continuous function (respectively continuous adapted process) $\varphi\colon[0, T] \rightarrow \mathbf{C}$, a branch square-root of $\varphi$ is a continuous (respectively continuous adapted) function $\theta\colon[0, T] \rightarrow \overline{\mathbf{H}}$ such that $\theta_{t}^{2}=\varphi_{t}$,
$\forall t \in[0, T]$. We then write $\theta_{t}=\sqrt{\varphi_t}^{b}.$
\end{definition}

Note that the only ambiguity while choosing a branch square root is when $\varphi_t \in (0,\infty)$. In that case a branch square root makes a choice from $\pm \sqrt{|\varphi_t|}$ in a continuous adapted way.

We consider the It\^o SDE 
\begin{equation}\label{Eq2}
d\varphi_t=2\sqrt{\varphi_t}^{b}dV_t+ (\kappa_t-4)dt, \hspace{2mm} \varphi_0=0,
\end{equation}
where $\sqrt{\varphi}^{b}$ is a branch of square root of $\varphi$ (compare this to the Bessel SDE where it is $\sqrt{|\varphi_t|}$ and therefore the solution is forced to be real valued). Our following theorem establishes the existence and uniqueness of strong solutions to such SDEs when $\bar\kappa <4$. \footnote{Note that Yamada-Watanabe Theorem does not apply below because complex square root $\sqrt{z}$ is not a $1/2$-H\"older function.} 

\begin{theorem}\label{main-thm-3}
If $V_t = \int_0^t \sqrt{\kappa_s}\,dB_s+A_t$ is a semimartingale satisfying condition \ref{sem-cond} with $\bar\kappa <4$, then
\begin{itemize}
\item[a)] If $\varphi$ is a solution to \eqref{Eq2}, then a.s. $\forall t>0$, $\varphi_t \in \mathbf{C} \setminus [0, \infty)$. In particular, $\sqrt{\varphi_t}^b=\sqrt{\varphi_t}$, and \eqref{Eq2} is equivalent to 
\begin{equation}\label{Eq3}
d\varphi_t=2\sqrt{\varphi_t}dV_t+ (\kappa_t-4)dt, \hspace{2mm} \varphi_0 =0.
\end{equation}
\item[b)] There exists a continuous adapted process satisfying \eqref{Eq3}. Moreover, if $\varphi$ and $\tilde{\varphi}$ are two such solutions, then 
\[\mathbf{P}\left[\varphi_{t}=\tilde{\varphi}_{t}, \hspace{2mm} \forall t \geqslant 0\right]=1.\]
\end{itemize} 
\end{theorem}

We define the solution to $h_t(0)$ of \eqref{Eq1} started from zero as $h_t(0)=\sqrt{\varphi_t}$. \footnote{Compare Theorem \ref{main-thm-3} with a result of Krylov-R\"ockner \cite {kr-singular-drift} which considers multi-dimensional SDEs with singular drifts. Equation \eqref{Eq1} started from zero can be viewed as a $2$-dimensioal SDE with a singular drift. A distinction between Theorem \ref{main-thm-3} and the result of \cite{kr-singular-drift} is that the noise term in Theorem \ref{main-thm-3} is only one-dimensional.}

\begin{remark}
Note that it follows that $\forall t\geq t_0>0$
$$h_{t}(0)=h_{t_{0}}(0)+V_{t}-V_{t_{0}}-\int_{t_{0}}^t \frac{2}{h_r(0)} d r.$$ 
It implies in particular that  
\begin{equation}\label{**}
\int_{0+}^{t} \frac{2}{h_{r}} d r :=\lim_{t_{0} \rightarrow 0+} \int_{t_{0}}^{t} \frac{2}{h_{r}} d r,
\end{equation}
exists. Hence, we obtain  
\begin{equation}\label{Eq-3}
h_t=V_{t}-V_0-\int_{0+}^{t} \frac{2}{h_{r}} d r.
\end{equation}
However, we do not know whether 
$\int_{0+}^{t} \frac{2}{|h_{r}|} d r < \infty$, and this makes \eqref{Eq-3} inconvenient to deal with directly. In particular, we do not know if the solution $h_t(0)$ is a semimartingale (compare it with real Bessel processes with dimension less than $1$ which are not semimartingales). 
\end{remark}

In the setup of Theorem \ref{main-thm-1}, let $U$ be the time reversal of the driving function $W$.
To represent the curve $\gamma$ in terms of the above SDE, we consider the stochastic flow of equation \eqref{Eq1} in $\overline{\mathbf{H}}$ by choosing $V_t = U_t - U_s$, i.e. for $0\leq s\leq t \leq T$ and $z \in \overline{\mathbf{H}}$, let $h(s,t,z)$ denote the solution of 
\begin{equation}\label{floweqn}
h(s, t, z)=z+U_{t}-U_{s}-\int_{s}^{t} \frac{2}{h(s, r, z)} d r, \hspace{2mm} h(s, s, z)=z.
\end{equation}
When $z\in \mathbf{H}$, the solution $h$ is classically well defined for all $t\geq s$. For $z=0$, we define $h(s,t,0) = \sqrt{\varphi_t}$, where $\varphi_t$ is as constructed by Theorem \ref{main-thm-3}. For $z\in \R \setminus \{0\}$, the solution is classically well-defined until the time the solution hits zero. We then again continue the solution for further time according to Theorem \ref{main-thm-3}. We will prove below that $h(s,t,0^{\downarrow}) := \lim_{y\to 0+} h(s,t,iy)$ exists a.s. uniformly in $s,t$, see \eqref{theta exists} below. It then follows easily that $\varphi_t := \lim_{y\to 0+} h(s,t,iy)^2$ is a solution to \eqref{Eq3}, and uniqueness of solution in Theorem \ref{main-thm-3} implies that for any $s,t$ we have $h(s,t,0^{\downarrow}) = h(s,t,0)$ almost surely. In particular, $h(\cdot,\cdot,0^{\downarrow})$ is a continuous modification of the random field $h(\cdot,\cdot,0)$.

Using \eqref{f=h}, we obtain: 

\begin{corollary}[SLEs as stochastic flows]\label{corr-1}
The curve $\gamma$ in Theorem \ref{main-thm-1} for $\bar\kappa <4$ is given by
\[ \gamma_{t}=h(T-t, T, 0^{\downarrow}) \]
which is a continous version of the process $t \mapsto h(T-t, T, 0)$.

In particular, for $\kappa <4$, the law of the SLE$_{\kappa}$ restricted to $[0,T]$ is the same as that of $h^{\kappa}(T-t, T, 0)$, where $h^{\kappa}$ is the stochastic flow driven by $\sqrt{\kappa}B.$

\end{corollary}

Some remarks are again in order:

\begin{itemize}

\item In connection to SLE$_\kappa(\rho)$ processes with applications in Liouville Quantum Gravity similar half-plane valued solutions to Bessel type SDEs have also been considered in \cite[Proposition 3.8]{dms-mating-treesx} where the existence and uniqueness of weak solutions was established. The Theorem \ref{main-thm-3} establishes the strong uniqueness. The SDE \eqref{Eq2} is not expected to have unique solution if the condition $\bar\kappa <4$ is not satisfied. For example, if $\kappa_s = \kappa =4$, a trivial solution is $\varphi \equiv 0$. One can however construct non-zero solutions by examining SLE$_4$. This is very similar to a related work of Bass-Burdzy-Chen \cite{BBC07} where they establish the uniqueness of solution to certain degenerate SDEs under the assumption that the solution spends zero time at zero. Similarly, when $\kappa_s = \kappa >4$, we can construct real solutions by examining usual Bessel processes, and complex solutions by examining SLE$_\kappa$, $\kappa >4$.
  
\item The Corollary \ref{corr-1} has an interesting implication. Note that the classical construction of SLE$_\kappa$ identifies these curves as boundary of simply connected domains which are obtained by evolving $\HH$ under the flow of backward LDE. The important implication of Corollary \ref{corr-1} is that in the particular case of $\kappa<4$, one can evolve only the boundary $\partial \HH = \R$ under the backward flow according to Theorem \ref{main-thm-3} and recover the SLE$_\kappa$ curve. We therefore do not have to evolve the interior of $\overline{\HH}$ and there is no ``loss of information" while passing to the boundary. In particular, this implies that SLE$_\kappa$ for $\kappa <4$ is a measurable w.r.t. $\{h^\kappa(0,T,x)\}_{x\in \R}$. We believe that this is very closely related to the fact that SLE$_\kappa$ is conformally removable for $\kappa <4$, see \cite{rs-sle}, \cite{js-removability} for details. This simplification of only evolving the boundary $\R$ is not enough to recover SLE$_\kappa$ for $\kappa>4$. In this case one genuinely has to evolve $\HH$ and recover SLE$_\kappa$ as boundary the evolved domain. This makes the problem of identifying the limit $\lim_{y\to 0+} f_t(iy + W_t)$ more complicated when $\kappa >4$. We plan to address this in our future projects.

\item Theorem \ref{main-thm-3} is a basis for the follow up work \cite{LDP-Bessel} which proves the large deviation principle for solutions $\varphi$ and $\kappa \to 0$. This has been motivated by previous works of Y. Wang \cite{wang-energy-reversibility,wang-energy-equivalent}.

\end{itemize}

\subsection{Sketch of ideas behind the proofs} \label{sketch-proof}

The proof of existence in both Theorem \ref{main-thm-1} and Theorem \ref{main-thm-2} will follow the outline of \cite{rs-sle}. In particular, we will verify in both cases that 
\begin{equation}\label{RS-C}
\gamma_t := \lim _{y \rightarrow 0+} f_{t}\left(i y+W_{t}\right)
\end{equation} 
exists and it is continuous in $t$. Also, in order to verify this, we will rely on the derivative bound 
\begin{equation}\label{req-bound}
\sup_t |f_t'(iy + W_t)| \lesssim y^{-\theta}
\end{equation} 
for some $\theta <1$. By \cite[Corollary 3.12]{vl-sle-hoelder}, this condition is sufficient for the Loewner chain to be generated by a continuous curve.

In Theorem \ref{main-thm-1}, the bound \eqref{req-bound} is obtained similarly as in \cite{rs-sle} by obtaining certain moment estimates on $|f_t'(iy + W_t)|$. In \cite{rs-sle}, they represent $f_t = g_t^{-1}$ using the backward flow $h_t$ (see \cref{prem} for details). Due to \eqref{formula}, the expectations $\ex\abs{h_t'(z)}^\lambda$ can be computed by solving a Feynman-Kac formula.
It turns out that it is easier to compute weighted moments of the form $\E[|h_t'(z)|^\lambda Y_t^{-\lambda}F(X_t/Y_t, Y_t)]$ for some appropriate function $F$, where $h_t(z)= X_t + iY_t$. It is also convenient to work with the coordinates $(w,y) = (x/y, y)$. In the case when $U= \sqrt{\kappa}B$, one can look for functions $F$ so that one can explicitly compute $ \E[|h_t'(z)|^\lambda Y_t^{-\lambda}F(X_t/Y_t, Y_t)]$. For $F = F(w,y) \in C^2$ we see from It\^o formula that
\begin{multline}\label{nc:eq:bw_M_ito}
d\left(|h_t'(z)|^\lambda Y_t^{-\lambda} F(\frac{X_t}{Y_t},Y_t)\right) 
= |h_t'(z)|^\lambda Y_t^{-\lambda-2} \\
\left[ \left( -\frac{4\lambda}{(1+X_t^2/Y_t^2)^2}F +\frac{2Y_t}{1+X_t^2/Y_t^2}F_y -\frac{4X_t/Y_t}{1+X_t^2/Y_t^2}F_w +\frac{\kappa_t}{2}F_{ww} \right) dt \right.\\
\left.\vphantom{\frac{X_t^2}{Y_t^2}} - \sqrt{\kappa_t}Y_t F_w \, dB_t \right]
\end{multline}

Finding an appropriate $F$ can be boiled down to solving a PDE $\Lambda^{\mathrm{(bw)}}_\kappa F =0$, where 
\begin{equation}\label{eq:diff_op_bw}
\Lambda^{\mathrm{(bw)}}_\kappa F := -\frac{4\lambda}{(1+w^2)^2}F +\frac{2y}{1+w^2}F_y -\frac{4w}{1+w^2}F_w +\frac{\kappa}{2}F_{ww} . 
\end{equation}    

An explicit solution to the above PDE is given by $F(w,y) = (1+w^2)^{r/2} y^{\zeta+\lambda}$, where $r, \zeta, \lambda$ are related by 
\begin{equation}\label{r-def}
\lambda = r(1+\frac{\kappa}{4})-\frac{r^2 \kappa}{8},\quad \zeta = r-\frac{r^2 \kappa}{8}.
\end{equation}
When $\kappa$ is not constant, or if there is a drift term in the semimartingale $U$, the above explicit computation is not feasible. In such cases, the problem of bounding $ \E[|f_t'(iy+ W_t)|^\lambda Y_t^{-\lambda}F(X_t/Y_t, Y_t)]$ for $\kappa_t \in [\ubar\kappa,\bar\kappa]$ can be interpreted as an optimal stochastic control problem. We would need to solve a Hamilton-Jacobi-Bellman-type equation
\[ \sup_{\kappa \in [\ubar\kappa,\bar\kappa]} \Lambda_\kappa F = 0 . \]

Usually one cannot hope for an explicit solution to such equations. But it turns out that one can find supersolutions
\begin{equation}\label{eq:bw_supersol}
    \sup_{\kappa \in [\ubar\kappa,\bar\kappa]} \Lambda_\kappa F \le 0.
\end{equation}
In fact one can construct supersolutions by changing appropriately parameters $r, \lambda, \zeta$ in $F(w,y) = (1+w^2)^{r/2} y^{\zeta+\lambda}$, see Section \ref{proof-thm-1-exists} for details 
(Cf. \cite{bs-aims-sle,cr-le-symmetric-stable,ps-le-levyx} for similar ideas in slightly different settings. We are also reminded of computing superhedging prices under uncertain volatility, cf. \cite{jm-uncertain-vol}). The basic observation of this paper is that it is enough to construct supersolutions for estimating moments of $|h_t'(z)|$. 

\smallskip

For Theorem \ref{main-thm-2} we would again like to find the bound \eqref{req-bound} for $|f_t'(iy + W_t)|$. But since we have no good way of working with the time reversal of $W$, we do not have the backward Loewner flow at our disposal. We therefore have to work with $g_t'(z)$ instead of $f_t'(z)$. Obtaining (negative) moment estimates on $|g_t'(z)|$ is similar to the above computations. But, to obtain \eqref{req-bound} from the moment estimates on $|g_t'(z)|$ requires an additional idea as follows. Let $\delta>0$. We want to find an upper bound for $|f_t'(i\delta + W_t)| = |g_t'(f_t(i\delta + W_t))|^{-1}$. Observe that $z = f_t(i\delta + W_t)$ is the point where we have to start the forward flow $\{Z_s\}_{s\in [0,t]}$ in order to reach $Z_t = i\delta$. Of course the flow $\{Z_s\}_{s\in [0,t]}$ depends on the behaviour of $W$ in the time interval $[0,t]$. That means we would need to consider all possible points $z \in \mathbf{H}$ that might reach $i\delta$ at time $t$. It turns out that, using Koebe's distortion estimates, we can reduce the number of points and we only need to start the flow from a finite number of points. The number of points so needed will encode the information on $|f_t'(i\delta + W_t)|$, see Section \ref{proof-thm-2-exists} for the implementation of this idea. 

It is worth mentioning that this approach of analysing the trace directly via the forward flow also applies to usual SLE$_\kappa$ (with constant $\kappa$). In fact, it is used in \cite{yuan-psivarx} to obtain refined regularity statements for usual SLE$_\kappa$.

\smallskip

The simpleness part in Theorem \ref{main-thm-2} is done similarly as in \cite{rs-sle}. It boils down to proving that forward flow started at $x>0$ stays positive for all time. We prove this by simply adapting the corresponding proof for Bessel processes. On the other hand, the simpleness part in Theorem \ref{main-thm-1} requires a more careful analysis. It boils down to prove that if $T(s, x)=\inf\{t \geqslant s \mid h(s, t, x)=0\},$ then $T(s,x)$ is almost surely jointly continuous in $(s,x) \in [0,T]\times [0,\infty)$. This requires us to a priori verify that $T(s,x) <\infty$ a.s., and we in fact prove that it has finite moment of order $p>1$. The joint continuity of $T(s,x)$ is established via a covering argument: We cover $[0,T]$ using intervals of form $[s_n,s_{n+1}]$, where $s_{n+1} =T(s_n,x_n)$. It turns out that the number of intervals needed to cover $[0,T]$ is of order $x_n^{-2}$. Since we have $p>1$ moments, we can also ensure that each $s_{n+1} - s_{n}$ is small. This implies that for all $s\in [s_n, s_{n+1}]$ and $x$ small enough, $T(s,x)$ is also small, see Section \ref{proof-thm-1-simple} for details. 

\smallskip

Throughout the rest of the paper, we are going to assume that our semimartingales satisfy (in addition to Condition~\ref{sem-cond})
\begin{equation}\label{energy}
\mathbf{E}\left[\exp \left\{\sigma \int_{0}^{T} \dot{A}_{r}^{2} d r\right\}\right]<\infty
\end{equation}
where $\sigma$ is a sufficiently large constant (depending on $\ubar\kappa, \bar\kappa$). This incurs no loss of generality since our theorems are almost sure statements, hence by defining $\widehat{S}_t \defeq M_t+A_{t \wedge \tau_n}$ where $\tau_n = \inf\{ t \mid \int_0^t \dot{A}_{r}^{2} d r = n \}$ we see that $S = \widehat{S}$ on the event $\{ \int_{0}^{T} \dot{A}_{r}^{2} d r \le n \}$, and letting $n \to \infty$ yields the general statements.

\subsection{Organization of the paper}\label{organisation}

We recall some preliminary facts on Loewner chains in Section \ref{prem}. Theorem \ref{main-thm-1} is proved in Section \ref{proof-thm-1}; the existence and simpleness are divided into subsection \ref{proof-thm-1-exists} and subsection \ref{proof-thm-1-simple}. Theorem \ref{main-thm-2} is proved in Section \ref{proof-thm-2}, with subsections \ref{proof-thm-2-exists} and \ref{proof-thm-2-simple} devoted to existence and simpleness respectively. The Corollary \ref{imp-cor} is proven in Section \ref{proof-imp-cor}. Finally, the Theorem \ref{main-thm-3} is proved in Section \ref{proof-thm-3}. In Section \ref{app-sec} we provide an application of Theorem \ref{main-thm-2} to SKLEs, and discuss some other prospective applications. 

\subsection{Acknowledgements}

The authors would like to thank Peter Friz and Steffen Rohde for many discussions and comments. We thank the referees for their careful reading and their comments. 
V.M. acknowledges the support of NYU-ECNU Institute of Mathematical Sciences at NYU Shanghai. A.S. acknowledges the support from the project PIC RTI4001: Mathematics, Theoretical Sciences and Science Education. Y.Y. acknowledges the support from European Research Council through Starting Grant 804166 (SPRS; PI: Jason Miller), and partial support through Consolidator Grant 683164 (PI: Peter Friz) during the initial stage of this work at TU Berlin.

\section{Preliminaries} \label{prem}

We will often write $a \lesssim b$ meaning $a \le Cb$ for some constant $C < \infty$ that may depend on the context and change from line to line. We write $a \asymp b$ when $a \lesssim b$ and $b \lesssim a$. 

We recall some basic facts about Loewner chains. More details can be found e.g. in \cite{law-conformal-book,kem-sle-book}  
We will consider Loewner chains $\{g_t\}_{t\in [0,T]}$ parametrised by its half plane capacity, i.e. the mapping out functions $g_t\colon\mathbf{H} \setminus K_t \to \mathbf{H}$ satisfies  
$$g_{t}(z)=z+\frac{2 t}{z}+O\left(\frac{1}{|z|^{2}}\right) \hspace{2mm} \text{as} \hspace{1mm} z \to \infty.$$
The driving function of $\{ g_t \}$ is given by 
$$W_{t}:=\cap_{h > 0} \overline{g_{t}\left(K_{t+h} \backslash K_{t}\right)}.$$

The maps $g_t$ satisfy the (forward) Loewner Differential Equation 
\begin{equation}\label{LDE}
\partial_{t} g_{t}(z)=\frac{2}{g_{t}(z)-W_{t}}, \quad g_{0}(z)=z.
\end{equation}

It will be convenient for us to work with $Z_t(z) = g_t(z)- W_t$. Writing $Z_t(z) = R_t + i I_t$, it follows that
\begin{equation} \begin{split}\label{RI-def}
    dR_t &= -dW_t + \frac{2R_t}{R_t^2+I_t^2}dt,\\
    dI_t &= \frac{-2I_t}{R_t^2+I_t^2}dt,
\end{split} \end{equation}
and
\begin{equation*}
 |g_t'(z)| = \exp \left( -2 \int_0^t \frac{R_s^2-I_s^2}{(R_s^2+I_s^2)^2} \, ds \right).\\
\end{equation*}

The inverse map $f_t(z)=g_t^{-1}(z)\colon \mathbf{H} \to \mathbf{H}\setminus K_t$ can be obtained by solving the following backward LDE. Let $h(s,t,z)$ denote the \textit{flow of solutions} of equation 
\begin{equation}\label{b-LDE-flow}
 h(s, t, z)=z+U_{t}-U_{s}-\int_{s}^{t} \frac{2}{h(s, r, z)} d r,
 \end{equation}

where $U$ is the time reversal of $W$, i.e. $U_t = W_T - W_{T-t}$.
Note that $h(\cdot,\cdot,\cdot)$ satisfies the so called \textit{flow property}, i.e. for all $s\leq u \leq t$,
\[ h(s,t, z) = h(u,t, h(s,u,z)).\]
\vspace{2mm}
It can be then easily checked that 
\begin{equation}\label{f=h}
f_{t}(z+ W_t)=h(T-t, T, z),
\end{equation}
see e.g. \cite[Lemma 2.1]{stw-finite-var} for details. We will also sometimes write $\hat{f}_t(z)$ for $f_t(z + W_t)$.

\smallskip

The following exact formulas will be useful for us to check the condition \eqref{req-bound}. Writing $z = x+ iy$ and $h(s,t, z) = h_t(z)= X_t+iY_t$ (we will often drop the index $s, z$ to avoid the cumbersome notation, and its dependence on $s,z$ will be understood from the context), the equation \eqref{b-LDE-flow} is equivalently written as 
\begin{equation}\label{X-eqn}
dX_{t}=dU_t-\frac{2X_t}{X_t^{2}+Y_t^{2}} d t, \hspace{2mm} X_s = x, 
\end{equation}
\begin{equation}\label{Y-eqn}
dY_{t}=\frac{2 Y_{t}}{X_{t}^{2}+Y_{t}^{2}} d t, \hspace{2mm} Y_s = y. 
\end{equation}
 
 Also, it can be easily checked that  
\begin{equation}\label{formula}
\left|h^{\prime}(s, t, z)\right|=\exp \left\{\int_{s}^{t} \frac{2\left(X_{r}^{2}-Y_{r}^{2}\right)}{\left(X_{r}^{2}+Y_{r}^{2}\right)^{2}} d r\right\}. 
\end{equation}

Another key tool that will be important for our proof is the Gr\"onwall inequality. We will use in the following slightly unconventional form.
\begin{lemma}[Gr\"onwall inequality]\label{Gronwall}
Let $F(t,x)$ be a bounded continuous function that is continuously differentiable in $x$ with $\partial_x F(t,x) \geq 0$. Let $L_t$ be a continuous function such that 
$$
L_{t} \leq \int_{0}^{t} F\left(r, L_{r}\right) d r,$$ and $R_t$ be a continuous function satisfying 
\begin{equation}
R_{t}=\int_{0}^{t} F\left(r, R_{r}\right) d r.
\end{equation}
Then, $$L_t \leq R_t, \hspace{2mm} \forall t \geq 0.$$

\end{lemma}

\section{Proof of Theorem \ref{main-thm-1}} \label{proof-thm-1}

As we explained at the end of Section~\ref{sketch-proof}, we are going to assume that $U$ is a semimartingale satisfying Condition~\ref{sem-cond} and that additionally \eqref{energy} holds.

\subsection{Proof of existence of $\gamma$} \label{proof-thm-1-exists}
In the following we fix $T>0$. We will obtain an estimate of the form 
\begin{equation}\label{prime-estimate}
\sup_{0\leq s \leq T}|h'(s,T,iy)| \lesssim y^{-\theta}  
\end{equation}
for some $\theta < 1$. Then, using \eqref{f=h}, the bound \eqref{req-bound} follows, which is well known to imply the existence of $\gamma$ (cf. \cite[Corollary 3.12]{vl-sle-hoelder}). 
We also include Proposition \ref{crucial-estimate}-$(b)$ which will be important in the proof of simpleness of $\gamma$.

\begin{proposition}\label{crucial-estimate}
Suppose that $U$ satisfies Condition~\ref{sem-cond} and \eqref{energy}.
\begin{enumerate}
\item For $s \ge 0$, let
\[ M_t = |h_t'(z)|^\lambda Y_t^\zeta (1+X_t^2/Y_t^2)^{r/2}, \quad t \ge s. \]
There exist $r, \lambda, \zeta$ such that 
\begin{equation}\label{parameter-cond}
r >0,\quad \lambda >0,\quad \lambda + \zeta >2,
\end{equation} 
and 
\begin{equation}\label{M-bound}
\sup_{s,t,z} \E[M_t/M_0] < \infty. 
\end{equation}

\item If $\bar\kappa < 4$, there exists $\lambda>2$ such that  
\begin{equation}\label{keybound}
\sup _{s, y} \mathbf{E}\left[\sup _{t}\left|h^{\prime}(s, t, i y)\right|^{\lambda}\right]<\infty.
\end{equation}
\end{enumerate}
\end{proposition}

\begin{proof}[Proof of Proposition \ref{crucial-estimate}-$(a)$]

If the drift part $A\equiv 0$, we can follow exactly the strategy described in \cref{sketch-proof}. Let $F(w,y) = (1+w^2)^{r/2} y^{\zeta+\lambda}$ as above. Then $M_t =|h_t'(z)|^\lambda Y_t^{-\lambda} F(X_t/Y_t,Y_t)$. Recalling \eqref{eq:diff_op_bw}, a calculation reveals that $\Lambda^{\mathrm{(bw)}}_\kappa F \le 0$ on $\HH$ if and only if
\begin{equation}\label{eq:bw_supersol_cond}
\lambda-\zeta \ge \frac{r\kappa}{4}
\quad\text{and}\quad
\lambda+\zeta \le 2r+\frac{r\kappa}{4}-\frac{r^2 \kappa}{4}.
\end{equation}
If this is satisfied for all $\kappa \in [\ubar\kappa, \bar\kappa]$, then by \eqref{nc:eq:bw_M_ito}, we have that $(M_t)$ is a non-negative local supermartingale, and therefore a supermartingale. Therefore, $\E[M_t] \leq M_0$ which gives the required bound. We now show how to pick $r, \lambda, \zeta$ satisfying \eqref{eq:bw_supersol_cond}.

In the case $\bar\kappa < 8$, we pick $\lambda$ and $\zeta$ according to \eqref{r-def}, and $r = \frac{1}{2}+\frac{4}{\bar\kappa}$, in which case $\zeta+\lambda > 2$.

In the case $\ubar\kappa > 8$, we will pick $r \in [0,1]$, in which case the condition \eqref{eq:bw_supersol_cond} follows from $\lambda-\zeta \ge \frac{r\bar\kappa}{4}$ and $\lambda+\zeta \le 2r+\frac{r\ubar\kappa}{4}-\frac{r^2 \ubar\kappa}{4}$. Picking $\zeta+\lambda > 2$ is now possible if and only if $2r+\frac{r\ubar\kappa}{4}-\frac{r^2 \ubar\kappa}{4} > 2 \iff r \in {]\frac{8}{\ubar\kappa},1[}$. With any such $r$, we can then pick $\lambda = r+\frac{r(\bar\kappa+\ubar\kappa)}{8}-\frac{r^2 \ubar\kappa}{8}$ and $\zeta = r-\frac{r(\bar\kappa-\ubar\kappa)}{8}-\frac{r^2 \ubar\kappa}{8}$ which satisfy \eqref{eq:bw_supersol_cond}.

\smallskip

To handle the drift term, we will need an additional argument as follows. Applying It\^o formula to $\log(X_t^2  + Y_t^2)$, we obtain that 
\begin{align*}
\log \left(X_{t}^{2}+Y_{t}^{2}\right)
={}& \log(x^2 + y^{2})+ \int_{s}^{t} \frac{2 X_{u}}{X_{u}^{2}+Y_{u}^{2}} d U_{u} -\int_{s}^{t} \frac{2 X_{u}^2}{(X_{u}^{2}+Y_{u}^{2})^2} d[U]_ u  \nonumber\\
&-\int_{s}^{t} \frac{4(X_u^2-Y^2_{u})}{(X_{u}^{2}+Y_{u}^{2})^2}du +\int_{s}^{t} \frac{1}{X_{u}^{2}+Y_{u}^{2}} d[U]_{u}.
\end{align*}

We also note using \eqref{Y-eqn} that 
\[ Y_t = y \exp\biggl[ \int_s^t \frac{2}{X_u^2 + Y_u^2}du\biggr].\]
It therefore follows using \eqref{formula} that 
\begin{equation} \label{h'=exp}
|h_t'(z)|^\lambda Y_t^{\zeta} (1+X_t^2/Y_t^2)^{r/2} =  |h_t'(z)|^\lambda Y_t^{\zeta-r} (X_t^2 + Y_t^2)^{r/2} = y^{\zeta - r}(x^2+ y^2)^{r/2} \exp\bigl[\Theta_t \bigr],
\end{equation}
where 
\[
\Theta_t := \int_s^t \frac{rX_u}{X_u^2 + Y_u^2}dU_u - \int_s^t \frac{\kappa_u r^2 X_u^2}{2(X_u^2 + Y_u^2)^2}du + \int_s^t \frac{\alpha_uX_u^2}{(X_u^2 + Y_u^2)^2}du  + \int_s^t \frac{\beta_uY_u^2}{(X_u^2 + Y_u^2)^2}du,
\]

\vspace{2mm}
with \[\alpha_u := 2\lambda + 2 \zeta - 4r - \frac{\kappa_u r}{2} + \frac{\kappa_u r^2}{2},\] 
and 
\[\beta_u:= \frac{\kappa_u r}{2} + 2 \zeta - 2 \lambda.\]

Again, if the drift part $A\equiv 0$, the condition \eqref{eq:bw_supersol_cond} implies $\alpha_u \le 0$ and $\beta_u \le 0$. Then
\[\Theta_t \leq \int_s^t \frac{rX_u}{X_u^2 + Y_u^2}dU_u - \int_s^t \frac{\kappa_u r^2 X_u^2}{2(X_u^2 + Y_u^2)^2}du, \]
and 
\begin{align}\label{h-exp}
|h_t'(z)|^\lambda & Y_t^{\zeta} (1+X_t^2/Y_t^2)^{r/2} \\ \nonumber &\leq y^{\zeta - r}(x^2+ y^2)^{r/2} \exp\biggl[ \int_s^t \frac{rX_u}{X_u^2 + Y_u^2}dU_u - \int_s^t \frac{\kappa_u r^2 X_u^2}{2(X_u^2 + Y_u^2)^2}du\biggr].
\end{align}
Note that in case $U$ is a martingale, we see again that the right hand side in the above equation a positive local martingale, hence a supermartingale, and we reprove the above obtained bound $\E[M_t] \leq M_0$.

To handle the drift part $A$, observe that we can vary the parameters a bit so that \eqref{eq:bw_supersol_cond} is satisfied with strict inequalities. Then $\alpha_u \leq \alpha < 0$ and $\beta_u \leq \beta <0$ for some negative constants $\alpha, \beta$. This allows us to estimate
\[
\int_s^t \frac{rX_u}{X_u^2 + Y_u^2}dA_u = \int_s^t \frac{rX_u}{X_u^2 + Y_u^2}\dot{A}_udu \leq \frac{\delta^{2}r^2}{2} \int_{s}^{t} \frac{X_u^{2}}{\left(X_u^{2}+Y_{u}^{2}\right)^{2}} d u+\frac{1}{2\delta^{2}} \int_{s}^t \dot{A}_{u}^{2} du .
\]
If $\delta > 0$ is small enough, the first term can be absorbed into $\alpha$ where the second term has exponential moments by our condition \eqref{energy}.

We then have
\[
\Theta_t \le \int_s^t \frac{rX_u}{X_u^2 + Y_u^2}\sqrt{\kappa_u}dB_u + \int_s^t \left(-\frac{\kappa_u r^2}{2}+\alpha+\frac{\delta^{2}r^2}{2}\right) \frac{X_u^2}{(X_u^2 + Y_u^2)^2}du + \frac{1}{2\delta^{2}} \int_{s}^t \dot{A}_{u}^{2} du .
\]
Now, pick $p,q \in (1, \infty)$ with $p^{-1} + q^{-1} =1$, and apply H\"older's inequality to obtain that 
\begin{multline*}
\E[\exp(\Theta_t)] \leq \\
\E\biggl[\exp\biggl(\int_s^t \frac{prX_u}{X_u^2 + Y_u^2}\sqrt{\kappa_u}dB_u + p\int_s^t \left(-\frac{\kappa_u r^2}{2}+\alpha+\frac{\delta^{2}r^2}{2}\right) \frac{X_u^2}{(X_u^2 + Y_u^2)^2}du\biggr)\biggr]^{1/p} \times \\ 
\E\biggl[\exp\biggl( \frac{q}{2\delta^{2}} \int_{s}^t \dot{A}_{u}^{2} du\biggr)\biggr]^{1/q}.
\end{multline*}
Furthermore, writing the first term on the right-hand side as
\begin{multline*}
\E\biggl[\exp\biggl(\int_s^t \frac{prX_u}{X_u^2 + Y_u^2}\sqrt{\kappa_u}dB_u - \int_s^t \frac{\kappa_u p^2 r^2 X_u^2}{2(X_u^2 + Y_u^2)^2}du \\
+ \int_s^t \left(p\alpha + \frac{p\delta^{2}r^2}{2} + (p^2-p)\frac{\kappa_u r^2}{2}\right)\frac{X_u^2}{(X_u^2 + Y_u^2)^2}du\biggr)\biggr],
\end{multline*}
we again note similarly as above that 
\[\exp\biggl(\int_s^t \frac{prX_u}{X_u^2 + Y_u^2}\sqrt{\kappa_u}dB_u - \int_s^t \frac{\kappa_u p^2 r^2 X_u^2}{2(X_u^2 + Y_u^2)^2}du\biggl)\] 
is a supermartingale. Picking $\delta > 0$ small and $p > 1$ sufficiently close to $1$ (recall $\alpha < 0$), we finally obtain
\[ \E[\exp(\Theta_t)] \leq \E\biggl[\exp\biggl(\frac{q}{2\delta^{2}} \int_{s}^t \dot{A}_{u}^{2} du\biggr)\biggr]^{1/q},\]
which gives us the required bound \eqref{M-bound}.
\end{proof}

\begin{proof}[Proof of Proposition \ref{crucial-estimate}-$(b)$]
If we also insist on $\zeta >0$, then \eqref{h-exp} implies 
\begin{align}
|h_t'(iy)|^\lambda &\leq (1+X_t^2/Y_t^2)^{-r/2} \bigl(\frac{y}{Y_t}\bigr)^{\zeta}\exp\biggl[ \int_s^t \frac{rX_u}{X_u^2 + Y_u^2}dU_u - \int_s^t \frac{\kappa_u r^2 X_u^2}{2(X_u^2 + Y_u^2)^2}du\biggr] \\
& \leq \exp\biggl[ \int_s^t \frac{rX_u}{X_u^2 + Y_u^2}dU_u - \int_s^t \frac{\kappa_u r^2 X_u^2}{2(X_u^2 + Y_u^2)^2}du\biggr]. 
\end{align}
In case $A=0$,
by Dambis-Dubins-Schwarz (DDS) martingale embedding theorem, we note that \[\sup_t \biggl[ \int_s^t \frac{rX_u}{X_u^2 + Y_u^2}dU_u - \int_s^t \frac{\kappa_u r^2 X_u^2}{2(X_u^2 + Y_u^2)^2}du\biggr]\] 
is stochastically dominated by $\sup_{t\geq 0} (B_t - \frac{t}{2})$. It is well known that $\sup_{t\geq 0} (B_t - \frac{t}{2})$ is an exponential random variable with parameter $1$, see e.g. \cite[Exercise 3.12]{ry-stochastic-book}.
It particular, it has finite exponential moments of order $p<1$. We then simply note that $\bar\kappa <4$ allows us to have $\lambda >2$ in the above estimates. 
The case of non-zero $A$ is handled similarly as above.
\end{proof}

\begin{remark}\label{sup-remark}
The proof above can be seen as an instance of the It\^o-Tanaka trick. Note that $|h_t'(z)|$ is a bounded variation process in $t$. We however express it in terms of a stochastic integral using the It\^o Lemma. This allows us to use techniques from stochastic calculus to estimate $|h_t'(z)|$. We can compare the above proof with the method of Krylov-R\"ockner \cite{kr-singular-drift} which establishes strong uniqueness of solutions to various singular SDEs. The same argument as above also gives that when $A=0$ and $\kappa$ is constant (even if $\kappa =8$),
\[\sup _{s, y} \mathbf{P}\left[\sup _{t}\left|h^{\prime}(s, t, i y)\right| \geq K\right] \leq \frac{C}{K^2}.\]
 But the above quadratic tail estimate is not enough to obtain \eqref{theta exists}.
\end{remark}

It is well known in the literature that the estimate \eqref{M-bound} implies the estimate \eqref{prime-estimate}. The following corollary is essentially the same as \cite[Corollary 3.5]{rs-sle}. For the convenience of the reader, we repeat it here with the slight adaptions to our case. We also include the bound \eqref{theta exists} which follows similarly from \eqref{keybound}. This will be important in the proof of simpleness of $\gamma$. 

\begin{corollary}\label{nc:thm:bw_hprime_probability}
Suppose that $U$ satisfies Condition~\ref{sem-cond} and \eqref{energy}.
\begin{enumerate}
\item Suppose $r,\lambda, \zeta$ are chosen according to Proposition \ref{crucial-estimate}-$(a)$. Then
\begin{equation*} 
\mathbf{P}(|h_t'(iy)| \ge u) \lesssim 
\begin{cases}
 u^{-(\zeta+\lambda)} & \text{if } \zeta > 0,\\
 u^{-\lambda} y^\zeta & \text{if } \zeta < 0.
\end{cases}
\end{equation*}
for all $0 \le s \le t$ and $y \in {]0,1]}$.\\
In particular the bound \eqref{prime-estimate} holds. 
\item For $\bar\kappa <4$, let $\lambda >2$ be chosen from Proposition \ref{crucial-estimate}-$(b)$, then 
\begin{equation}\label{apply-markov}
\mathbf{P}(\sup_t |h_t'(iy)| \ge u) \lesssim u^{-\lambda}.
\end{equation} 

In particular, there exists $\theta <1$ such that 
\begin{equation}\label{theta exists}
\sup _{0 \leq s \leq t \leq T}\left|h^{\prime}(s, t, i y)\right| \lesssim y^{-\theta}.
\end{equation}
 
\end{enumerate}
\end{corollary}

\begin{proof}

For the part $(a)$, recall that the equation \eqref{Y-eqn} implies $Y_t \le \sqrt{y^2+4t}$. Moreover, by the Schwarz lemma we have $|h_t'(iy)| \le \frac{Y_t}{y}$, therefore $|h_t'(iy)| \ge u$ implies $Y_t \ge yu$. Hence,
\begin{align*}
\mathbf{P}(|h_t'(iy)| \ge u) 
&\le \sum_{m=\lfloor\log u \rfloor}^{\lceil\frac{1}{2}\log(1+4t/y^2)\rceil} \mathbf{P}\left(|h_t'(iy)|\ge u,\ Y_t \in [ye^{m-1},ye^m]\right)\\
&\lesssim \sum_{m=\lfloor\log u \rfloor}^{\lceil\frac{1}{2}\log(1+4t/y^2)\rceil} u^{-\lambda} y^{-\zeta}e^{-m\zeta} \E[M_t] \\
&\lesssim \sum_{m=\lfloor\log u \rfloor}^{\lceil\frac{1}{2}\log(1+4t/y^2)\rceil} u^{-\lambda} e^{-m\zeta} \\
&\lesssim u^{-\lambda} \begin{cases}
u^{-\zeta} & \text{if } \zeta > 0,\\
(1+4t/y^2)^{-\zeta/2} & \text{if } \zeta < 0
\end{cases}\\
&\lesssim \begin{cases}
u^{-\zeta-\lambda} & \text{if } \zeta > 0,\\
u^{-\lambda} y^\zeta & \text{if } \zeta < 0.
\end{cases}
\end{align*}
The bound \eqref{prime-estimate} follows using a Borel-Cantelli argument. More precisely, if $\zeta >0$ (resp. $\zeta <0$), we choose $\theta<1$ such that $(\lambda + \zeta)\theta >2$ (resp. $\lambda\theta + \zeta >2$). It then follows from the above that  
\begin{equation}\label{finite-sum}
 \sum_{n} \sum_{k=0}^{2^{2n}} \mathbf{P}( |h'(k2^{-2n}, T, i2^{-n})| \geq 2^{n\theta}) < \infty.
 \end{equation}
The Borel-Cantelli lemma implies that almost surely 
\begin{equation}\label{discrete}
 |h'(k2^{-2n}, T, i2^{-n})| \leq 2^{n\theta}
 \end{equation} for all $n$ large enough. Using the fact that $U$ is weakly $1/2$-H\"older \footnote{It can be easily checked e.g. using the DDS martingale embedding theorem that semimartigales satisfying Condition \ref{sem-cond} are indeed weakly $1/2$-H\"older.}, the bound \eqref{prime-estimate} follows from \eqref{discrete} using the classical distortion theorem, see \cite[Section 3]{vl-sle-hoelder} for details. 

\smallskip

For the part $(b)$, the bound \eqref{apply-markov} follows easily from \eqref{keybound} and the Markov inequality. The uniform estimate \eqref{theta exists} follows from the distortion theorem similarly as above. 
\end{proof}

\begin{remark}
In the case $\ubar\kappa > 8$, one can ask whether there are smarter ways of finding supersolutions to \eqref{eq:bw_supersol} that are sharper. Looking at the proofs of \cite{vl-sle-hoelder,ft-sle-regularity}, the optimal regularity of the SLE that can be proved are directly related to the exponents $r,\lambda,\zeta$ (there is a bit more freedom for $r$ though). It is reasonable to believe that the regularity of $\gamma$ in our case should be the same as for SLE$_{\ubar\kappa}$. One possible attempt to prove such a thing would be to find a supersolution to $\sup_{\kappa \in [\ubar\kappa,\bar\kappa]} \Lambda_\kappa F \le 0$ that is asymptotically comparable to $(1+w^2)^{r/2} y^{\zeta+\lambda}$ at least for $y \searrow 0$ (where $\lambda,\zeta$ are chosen accordingly with $\kappa = \ubar\kappa$).

Under certain conditions on $\ubar\kappa$, $\bar\kappa-\ubar\kappa$, and $r$, a function of the form
\[ F(w,y) = y^{\zeta+\lambda}(1+w^2)^{r/2} \exp(g(w)) \]
with some bounded $g$ indeed does the trick. More precisely, we can pick $g$ such that $g'$ is of the form
\[ g'(w) = 
\begin{cases}
-\alpha_1 w & \text{for } w \le w_0,\\
-\alpha_2 w^{-1-\varepsilon} & \text{for } w \ge w_0.
\end{cases}
\]
This works whenever $\bar\kappa-\ubar\kappa$ is sufficiently small (depending on $\ubar\kappa,r$). Unfortunately, we did not succeed in making this work in general.
\end{remark}

\subsection{Proof of simpleness of $\gamma$}  \label{proof-thm-1-simple}
We now prove the simpleness of the curve $\gamma$ when $\bar\kappa <4$. We first need to prove some preparatory lemmas. It will be convenient for us to extend the definition of $U_t$ for all $t \in [0, \infty)$ by defining $U_t=U_T$, $\forall t \geq T$. This naturally extends the definition of $h(s,t,z)$ for all $0\leq s\leq t < \infty$. We will also need to consider the stochastic flow $h(s, t, x)$ started at $x>0$ defined by \eqref{b-LDE-flow}. Note that the solution $h(s,t,x)$ is only well-defined for $t <T(s,x)$, where 
\begin{equation}
T(s, x)=\inf\{t \geqslant s \mid h(s, t, x)=0\}. 
\end{equation}
It is a priori not clear that $T(s,x) <\infty$ a.s., but we prove that this is indeed the case. The following proposition is the most important step in the proof of simpleness of $\gamma$. 

\begin{proposition}\label{Proposition10}
If $U$ is a semimartingale satisfying Condition \ref{sem-cond} with $\bar\kappa <4$ and \eqref{energy}, then there exists $p>1$ depending only on $\kappa, \sigma$ such that 
\begin{equation}
\sup _{s, x} \mathbf{E}\left[\left|\frac{T(s, x)-s}{x^{2}}\right|^{p}\right]<\infty.
\end{equation}
\end{proposition}

\begin{proof}
Applying It\^o formula to $\log(h_t(x))$, it follows that
\begin{align*}
\log h_{t}(x)&=\log (x)+\int_{s}^{t} \frac{1}{h_{r}(x)} d U_{r}-\int_{s}^{t} \frac{2}{h_{r}(x)^{2}} d r-\frac{1}{2} \int_{s}^{t} \frac{1}{h_{r}(x)^{2}} d[U]_{r}\\
&=\log (x)+\int_{s}^{t} \frac{\sqrt{\kappa_r}}{h_{r}(x)} dB_r+\int_{s}^{t} \frac{1}{h_{r}(x)} \dot{A}_{r} d r-
\int_{s}^{t} \frac{2}{h_{r}(x)^{2}} d r-\frac{1}{2} \int_{s}^{t} \frac{\kappa_r}{h_{r}(x)^{2}} dr\\
&\leq 
\log (x)+\int_{s}^{t} \frac{\sqrt{\kappa_r}}{h_r(x)} dB_{r}-\frac{1}{2} \int_{s}^{t} \frac{\kappa_r}{h_r(x)^{2}} dr\\
&\quad -\left(2-\frac{\delta^{2}}{2}\right) \int_{1}^{t} \frac{1}{h_{r}(x)^{2}} d r+\frac{1}{2 \delta^{2}} \int_{s}^{t} \dot{A}_{r}^{2} d r,
\end{align*}
where in the last line we have used 
\begin{align*}
\frac{\dot{A}_{r}}{h_{r}(x)} \leqslant \frac{\delta^{2}}{2} \frac{1}{h_{r}(x)^{2}}+\frac{1}{2 \delta^{2}} \dot{A}_{r}^{2},
\end{align*}
for some $\delta$ small enough. Next, for $\eta>0$ small enough, we write
\begin{equation}
\left(2-\frac{\delta^{2}}{2}\right) \int_{s}^{t} \frac{1}{h_r(x)^{2}} d r=\left(2-\frac{\delta^{2}}{2}-\eta\right) \int_{s}^{t} \frac{1}{h_{r}(x)^{2}} d r+\eta\int_{s}^t \frac{1}{h_{r}(x)^{2}} d r.
\end{equation}
Using $\kappa_r \leq \bar\kappa$, note that 
\begin{equation}
\left(2-\frac{\delta^{2}}{2}-\eta\right) \int_{s}^{t} \frac{1}{h_{r}(x)^{2}} d r \geqslant \frac{1}{\bar\kappa}\left(2-\frac{\delta^{2}}{2}-\eta\right) \int_{s}^{t} \frac{\kappa_r }{h_{r}(x)^{2}} dr.
\end{equation}
Therefore, we obtain that for all $t <T(s,x)$,
\begin{align}\label{logh-bound}
\log h_t(x) 
&\leqslant \log (x)+\int_{s}^{t} \frac{\sqrt{\kappa_r}}{h_{r}(x)} dB_{r}-\left\{\frac{1}{2}+\frac{1}{\bar\kappa}\left(2-\frac{\delta^{2}}{2}-\eta\right)\right\} \int_{s}^{t} \frac{\kappa_r}{h_r(x)^{2}} dr\nonumber\\
&\quad +\frac{1}{2 \delta^{2}} \int_{s}^{t} \dot{A}_{r}^{2} d r-\int_{s}^{t} \frac{\eta}{h_{r}(x)^{2}} d r.
\end{align}
Now, let \begin{align*}
T_{\varepsilon}(s, x)=\inf \{t \geqslant s \mid h(s, t, x)=\varepsilon\}.
\end{align*}
Then, for $t \leq T_{\varepsilon}(s, x)$ it follows from \eqref{logh-bound} that 
\begin{equation}
\log h_{t}(x) \leqslant \log (x)+\Theta-\int_{s}^{t} \frac{\eta}{h_{r}(x)^{2}} d r,
\end{equation}
where $\Theta= \Theta_1 + \Theta_2$ with 
\[
\Theta_1 := \operatorname{sup}_{t \leq T_{\varepsilon}\left(s, x\right)}\left\{\int_{s}^{t} \frac{\sqrt{\kappa_r}}{h _{r}(x)} dB_{r}-\left\{\frac{1}{2}+\frac{1}{\bar\kappa}\left(2-\frac{\delta^{2}}{2}-\eta\right)\right\} \int_{s}^{t} \frac{\kappa_r}{h_{r}(x)^{2}} dr\right\}
\]
\[
\Theta_2 := \frac{1}{2 \delta^{2}} \int_{0}^{T} \dot{A}_{r}^{2} d r.
\] 
Now consider \eqref{logh-bound} by putting an equality instead of inequality. That is, let $\psi_t$ be the solution to 
\[ \log(\psi_t) = \log(x) + \Theta - \int_s^t \frac{\eta}{\psi_r^2}dr.\]
Note that this is an ODE for $u_t = \log(\psi_t)$. It can be easily verified that the function $\psi_t =  \sqrt{x^{2} e^{2 \Theta}-2 \eta(t-s)}$ is the solution to this ODE. Hence, using Lemma \ref{Gronwall}, we obtain that 
\begin{equation}
h_{t}(x) \leq \sqrt{x^{2} e^{2 \Theta}-2 \eta(t-s)}.
\end{equation}
Therefore, it follows that \begin{equation}
T_{\varepsilon}(s, x)-s \leq \frac{1}{2\eta}\left(x^{2} e^{2 \Theta}-\varepsilon^{2}\right),
\end{equation}
which implies that 
\begin{equation}\label{Eq7}
\mathbf{E}\left[|T_{\varepsilon}(s, x)-s|^{p}\right]
\lesssim \left(\varepsilon^{2 p}+\mathbf{E}\left[x^{2 p} e^{2 p \Theta}\right]\right).
\end{equation}

\medskip

In order to estimate $\mathbf{E}\left[e^{2 p \Theta}\right]$ we use the same argument as in the proof of Proposition \ref{crucial-estimate}-$(b)$. We again note that 
$\Theta_1$ is dominated by an exponential random variable, and we have assumed that $\Theta_2$ has high enough exponential moments. Given $\bar\kappa<4$, it can be easily checked that we can choose parameters $\delta, \eta$ such that $\mathbf{E}\left[e^{2 p \Theta}\right]$ is bounded for some $p>1$. Then, letting $\epsilon \to 0+$ in \eqref{Eq7} and using monotone convergence theorem, we obtain that
\begin{equation}
\mathbf{E}\left[|T(s, x)-s|^{p}\right] \lesssim  x^{2 p}.
\end{equation}
\end{proof}
We will sometimes need to specify the dependence of $T(s,x)$ on the driving function $U$, and we will write $T(s,x,U)$ to do so. We will also need to consider $T(s,x,M)$, i.e. when $U$ is simply a martingale and $A\equiv 0$. Besides the upper-bound provided by Proposition \ref{Proposition10}, we will also need a lower bound on $\mathbf{E}[T(s,x,M)].$ More generally, note that for any stopping time $\tau$ such that $\tau <\infty$ a.s., by the optional sampling theorem, the process 
$\left\{M_{t}-M_{\tau}\right\}_{t \geqslant \tau}$ is a local martingale satisfying Condition \ref{sem-cond} with $\bar\kappa <4$. Therefore, we can consider the random variable $T(\tau, x, M)$ for any such stopping time $\tau.$ We further claim the following:
\begin{lemma}\label{lower bound}
Let $\tau$ be a stopping time such that $\tau <\infty$ a.s. Then,
\begin{equation}\label{lower-1/4}
\mathbf{E}\left[T(\tau, x, M)-\tau \mid \mathcal{F}_{\tau}\right] \geqslant \frac{x^{2}}{4}.
\end{equation}
\end{lemma}
\begin{proof}
Let $l_t=l_t(x)$ denote the solution to the equation
\begin{equation}
d l_{t}=d M_{t}-\frac{2}{l_{t}} d t, \hspace{2mm} 
l_{\tau}=x, t \geqslant \tau.
\end{equation}
Clearly, since $l_t >0$ for $t <T(\tau, x, M)$,
\begin{equation}\label{l-bound}
l_{t} \leqslant x+M_{t}-M_{\tau} \quad \forall t<T(\tau, x, M).
\end{equation}
Next, consider the process $l_t^2$. By the It\^o formula, 
\begin{equation}\label{l^2-eqn}
l_{t}^{2}=x^{2}+\int_{\tau}^{t} 2 l_{r} d M_{r}+[M]_{t}-[M]_{\tau}-4(t-\tau).
\end{equation} 
Using the optional sampling theorem, conditional on $\mathcal{F}_{\tau}$, the process $
\int_{\tau}^{\cdot} l_{r} d M_{r}$ is a local martingale. We claim that it is in fact a martingale. To this end, observe using \eqref{l-bound} that 
\begin{align*}
\mathbf{E}\left[\left(\int_{\tau}^{t} l_{r} d M_{r}\right)^{2}\right] &=\mathbf{E}\left[\int_{\tau}^{t} l_{r}^{2} d[M]_{r}\right]\\
&\leq \bar\kappa \mathbf{E}\left[\int_{\tau}^{t} l_{r}^{2} d r\right]\\
&\leqslant \bar\kappa \mathbf{E}\left[\int_{\tau}^{t}\left(x+M_{r}-M_{\tau}\right)^{2} d r\right],
\end{align*}
It then follows using the Burkholder-Davis-Gundy inequality that the second moment of $
\int_{\tau}^{t} l_{r} d M_{r}$ is bounded for all bounded $t$, thereby implying that it is a martingale.

Now, note that $
l_{T(\tau, x, M)}=0$. Therefore, using \eqref{l^2-eqn},

\begin{align*}
x^{2}+\int_{\tau}^{T(\tau, x, M)}2{l_{r}} d M_{r}&=
4\left(T(\tau,  x, M)-\tau\right)-\left([M]_{T(\tau,  x, M)}-[M]_{\tau}\right)\\
&\leq 4\left(T(\tau, x, M)-\tau\right).
\end{align*}
Note using Proposition \ref{Proposition10} that $\mathbf{E}\left[T(\tau, x, M)\right]<\infty$. Since $
\int_{\tau}^{\cdot} l_{r} d M_{r}
$ is a martingale w.r.t. to 
$\mathbf{P}\left(\cdot \mid \mathcal{F}_{\tau}\right)$, this implies that 
$$
\mathbf{E}\left[\int_{\tau}^{T(\tau, x, M)} l_{r} d M_{r} |\mathcal{F}_{\tau}\right]=0$$
which implies \eqref{lower-1/4}.
\end{proof}
\vspace{4mm}

The final step towards the simpleness of $\gamma$ is the following proposition.
 
\begin{proposition}\label{Proposition 11}
If U is a semimartingale satisfying Condition \ref{sem-cond} with $\bar\kappa <4$ and \eqref{energy}, then 
$$\mathbf{P}\left[\lim _{x \rightarrow 0+} T(s, x, U)=s \hspace{2mm} \text { for all } s \geqslant 0\right]=1.$$
\end{proposition}

\begin{proof}
We first note that 
$$U_{t}-U_{s}=M_{t}-M_{s}+A_{t}-A_{s} \leqslant M_{t}-M_{s}+\int_{s}^{t}|\dot{A}_{r}| d r.$$
Therefore, if $\tilde{U}_{t}=M_{t}+\int_{0}^{t}|\dot{A}_{r}| d r$, then
$U_{t}-U_{s} \leqslant \tilde{U}_{t}-\tilde{U}_{s}.$
This in turn implies using Lemma \ref{Gronwall} that 
$$T(s, x,U) \leqslant T(s, x, \tilde{U}).$$ 
Therefore, it suffices to prove that simultaneously for all $s\geq 0$, \begin{equation}\label{goal}
\lim _{x \rightarrow 0+} T(s, x, \tilde{U})=s.
\end{equation}
The advantage of switching from $U$ to $\tilde{U}$ is that $\tilde{U}$ has monotonic increasing drift. This will be important in the proof below. For the rest of the proof, we simply write $T(s,x)$ for $T(s, x, \tilde{U})$. Since $\tilde{U}_t$ is constant for $t >T$, it easily follows that $T(s, x)=s+\frac{x^{2}}{4}$ for $s>T$ and \eqref{goal} is trivially true. To prove \eqref{goal} for $s \leq T$, we note using Lemma \ref{Gronwall} that for all $x_1\geq x_2$, we have $ h\left(s, t, x_{1}\right) \geqslant h\left(s, t, x_{2}\right)$. Therefore, $
T(s, x_{1}) \geqslant T(s, x_{2}).$
This implies that $T(s, 0+)-s:=\lim _{x \rightarrow 0+} T(s, x)-s$ exists.
The core of the argument is to prove that this limit is indeed zero for all $s \in [0, T].$ To do so, we note using the flow property of $h(\cdot, \cdot, \cdot)$ that for any $s \leqslant u \leqslant T(s, x)$, 
\begin{equation}\label{chainrelation}
T(s, x)=T(u, h(s, u, x)) \geqslant T(u,0+).
\end{equation}
Therefore, if $T(s, x)-s$ is small, it implies that $T(u, x)-u$ is small for all $s <u < T(s, x)$. We utilize this observation to cover the interval $[0,T]$ using intervals of the form $[s,T(s,x)]$. More precisely, let $x_{n}=2^{-n}.$ Then, for each $n$, define recursively 
$$ s_{0}(n)=0 \quad s_{1}(n)=T(0, x_{n}),$$
$$s_{k+1}(n)=T(s_{k}(n), x_{n}).$$ 
We run this recursion enough number of times (say $K_n$ times) so that $s_{K_n}(n)$ crosses $T.$ Then, the interval $[0,T]$ is covered by union of intervals $\left[s_{k}(n), s_{k+1}(n)\right] .$ To get an estimate of the size of $K_n$, we require a lower bound on the increments $s_{k+1}(n)-s_{k}(n).$ To this end, we rely on Lemma \ref{lower bound} as follows. Define a sequence $m_k$ by $m_0=0$ and
$m_{k}=s_{k}- \sum_{i=1}^{k} \mathbf{E}\left[s_{i}-s_{i-1}| \mathcal{F}_{s_{i-1}}\right] .$ Then, since $\mathbf{E}\left[s_{k}\right]<\infty$ by Proposition \ref{Proposition10}, it easily follows that $m_k$ is a discrete martingale. Furthermore, note that $\tilde{U}_{t}-\tilde{U}_{s}  = M_{t}-M_{s}+\int_{s}^{t}|\dot{A}_{r}| d r \geqslant M_{t}-M_{s}.$ Therefore, by Lemma \ref{Gronwall} 
$$s_{i}-s_{i-1}=T(s_{i-1}, x_{n})-s_{i-1} \geqslant T(s_{i-1}, x_{n}, M)-s_{i-1}.$$
Therefore, using Lemma \ref{lower bound}, 
$$\mathbf{E}\left[s_{i}-s_{i-1} \mid \mathcal{F}_{s_{i-1}}\right] \geqslant \frac{x_{n}^{2}}{4},$$
and it follows that
$$s_{k} \geqslant m_{k}+k \frac{x_{n}^{2}}{4}.$$
Since the martingale $m_k$ is of mean zero, this suggests that to choose $K_n$ such that $s_{K_{n}}>T$ it suffices to take $K_{n}=\left\lfloor\frac{16 T}{x_{n}^{2}}\right\rfloor.$ To make it precise, we claim that for $K_{n}=\left\lfloor\frac{16 T}{x_{n}^{2}}\right\rfloor$,
$$m_{K_{n}} \stackrel{\mathbf P}{\longrightarrow} 0 \text { as, } n \rightarrow \infty .$$

To prove it, note that since $m_k$ is a martingale, using Burkholder-Davis-Gundy inequality 
$$\mathbf{E}\left[m_{K_{n}}^{p}\right]\leq C_{p} \mathbf{E}\left[\left(\sum_{i=1}^{K_{n}}\left(m_{i}-m_{i-1}\right)^{2}\right)^{p/ 2}\right].$$
We choose $p \in (1,2)$ so that the Proposition \ref{Proposition10} is valid. Therefore, 
$$ \mathbf{E}\left[m_{K_{n}}^{p}\right] \leqslant C_{p} \sum_{i=1}^{K_{n}} \mathbf{E}\left[ | m_{i}-m_{i-1}|^{p} \right] .$$
Also, using Proposition \ref{Proposition10} 
\begin{align*}
\mathbf{E}\left[\left |m_{i}-m_{i-1}\right|^{p}\right] &\leqslant C_{p} \mathbf{E}\left[\left|s_{i}-s_{i-1}\right|^{p}\right]\\
&=C_{p} \mathbf{E}\left[\left|T(s_{i-1}, x_{n})-s_{i-1}\right|^{p}\right]\\
&\leqslant C x_{n}^{2 p}.
\end{align*}
This implies that 
$\mathbf{E}\left[m_{K_{n}}^{p}\right] \leqslant C x_{n}^{2 p} K_{n} \rightarrow 0$, as $n \rightarrow \infty.$
This in particular implies that $m_{K_{n}} \stackrel{\mathbf{P}}{\longrightarrow} 0$, as $n \to \infty.$ Since convergence in probability implies a.s. convergence on a subsequence, we conclude that a.s. for infinitely many $n$, we have $ m_{K_{n}}>-T.$ In particular, 
$$ s_{K_{n}} \geqslant m_{K_{n}}+K_{n} \frac{x_{n}^{2}}{4}>T,$$ which implies that almost surely for infinitely many $n$, $[0, T]$ is covered by $\bigcup_{k=0}^{K_{n}-1}\left[s_{k}, s_{k+1}\right] .$
Next, we now establish that each of the increment $s_{k+1}-s_{k} \text { for } 0 \leq k \leq K_{n}-1$ are uniformly small. To this end, let $p>1$ be from Proposition \ref{Proposition10} and consider the event 
$$E_n=\{\text{ For some} \hspace{2mm} 0 \leq k \leq K_n-1, \hspace{1mm} s_{k+1}-s_{k}>2^{-n \frac{p-1}{p}}\}.$$ Then, using Proposition \ref{Proposition10}, 
\begin{align*}
\mathbf{P}\left[E_{n}\right] &\leq K_{n}\mathbf{P}\left[s_{k+1}-s_{k}>2^{-n \frac{p-1}{p}}\right]\\
&\leq K_{n} \frac{E\left[\left|s_{k+1}-s_{k}\right|^{p}\right]}{2^{-n(p-1)}}\\
&\leq C \frac{1}{x_{n}^{2}} \frac{x_{n}^{2 p}}{2^{-n(p-1)}}\\
&=C 2^{-n(p-1)}.
\end{align*}
Therefore, $\sum_{n=1}^{\infty} \mathbf{P}\left[E_{n}\right]<\infty$ and the Borel-Cantelli Lemma implies that almost surely, for all $n$ large enough and for all $k$ with $0 \leq k \leq K_n-1,$ $s_{k+1}(n)-s_{k}(n) \leq 2^{-n(p-1)/p}.$
Finally, for any $u \in [0, T]$, $u \in [s_k(n), s_{k+1}(n)]$ for some $0 \leq k \leq K_n-1,$ for infinitely many $n$. Therefore, by \eqref{chainrelation} 
\begin{align*}
T(u, 0+)-u &\leqslant T(s_{k}(n), x_{n})-u\\
&\leqslant T(s_{k}(n), x_{n})-s_{k}(n)\\
&=s_{k+1}(n)-s_{k}(n)\\
&\leq 2^{-n \frac{(p-1)}{p}}.
\end{align*}
Taking $n \to \infty$, we get $T(u,0+)-u=0$.
\end{proof}

\vspace{2mm}

We are now ready to prove that $\gamma$ is simple. 

\begin{proof}[Proof of the simpleness]
It follows from \eqref{theta exists} that $h(s, t, 0^{\downarrow}) := \lim_{y \to 0+} h(s, t, iy)$ exists, where we have used $0^{\downarrow}$ to emphasise that we are taking vertical limit $iy \to 0$. We first claim that for any $s\in [0,T]$ and $\delta>0$ , $h(s, t, 0^{\downarrow})$ cannot be identically zero for $t \in [s, s+ \delta]$. Suppose the contrary. If for some $\delta>0$, $h(s, t, 0^{\downarrow}) = 0$ for all $t\in [s,s+ \delta]$, then first note from \eqref{theta exists} that 
\[ |h(s, t, iy)|= |h(s, t, iy)- h(s, t, 0^{\downarrow})| \lesssim y^{1-\theta}.\]
\vspace{2mm}

This implies that $X_t(iy)^2 + Y_t(iy)^2 \lesssim y^{2- 2\theta}$, and $Y_t(iy) \lesssim y^{1-\theta}$. Further note from \eqref{Y-eqn} that 
\[ Y_t(iy) = y\exp \biggl(\int_s^t \frac{2}{X_r(iy)^2 + Y_r(iy)^2}dr\biggr).\]
Note that 
\[ \exp \biggl(\int_s^t \frac{2}{X_r(iy)^2 + Y_r(iy)^2}dr\biggr) \gtrsim \exp \biggl(\frac{2(t-s)}{y^{2- 2\theta}} \biggl),\]

This implies that $y^{1-\theta} \gtrsim y \exp \biggl(\frac{2(t-s)}{y^{2- 2\theta}} \biggl)$ which is clearly false for $y$ small enough. This is a contradiction.

\smallskip

We next claim that for all $s< t$, $h(s, t, 0^{\downarrow})  \in \HH$. Otherwise, pick $s<t$ such that $h(s, t, 0^{\downarrow})  \in \R$. Since the backward flow started from a point in $\HH$ stays in $\HH$, this implies that $h(s, u, 0^{\downarrow})  \in \R$ for all $u\leq t$. Since $h(s, \cdot, 0^{\downarrow}) $ is not identically zero, pick a $u_0 \in [s,t]$ such that $h(s, u_0 , 0^{\downarrow}) \neq 0$. Without loss of generality, we may assume that $h(s, u_0, 0^{\downarrow}) > 0$. Now, travelling backward in time from $u=u_0$ to $u=s$, let $\bar{u}$ be the first time where $h(s, \cdot, 0^{\downarrow})$ hits zero (we are kind of choosing a piece of $0$-excursion interval). Now, consider $L_u = h(\bar{u}, u, 0^{\downarrow})$. It follows that $L_{\bar{u}} = 0$ and $L_u > 0$ for all $0< u\leq u_0 $. Also note that 

\begin{equation}\label{L-eqn} 
L_u = U_{u} - U_{\bar{u}} -\int_{\bar{u}}^u \frac{2}{L_r}dr.
\end{equation}
Using comparison of ODE solutions, this implies that for any $x>0$, $h(\bar{u}, u, x) \geq L_u$. This is a contradiction to the fact that $\lim_{x\to 0+} T(\bar{u}, x, U) = \bar{u}$. 

Finally, note using the flow property that for any $s_1 < s_2$,  
\[ h(s_1, T, 0^{\downarrow}) = h(s_2, T, h(s_1, s_2, 0^{\downarrow}) ).\]
Since $h(s_1, s_2, 0^{\downarrow}) \in \HH$, this implies that $h(s_1, T, 0^{\downarrow})  \neq h(s_2, T, 0^{\downarrow})$. This completes the proof by noting from the equation \eqref{f=h} that $ \gamma_t = h(T-t, T, 0^{\downarrow})$. 
\end{proof}

\section{Proof of Theorem \ref{main-thm-2}}  \label{proof-thm-2}

\subsection{Proof of existence of $\gamma$}  \label{proof-thm-2-exists}

We implement the idea explained in Section \ref{sketch-proof}. In the following we denote by $B(z,r)$ the open ball around $z$ with radius $r$, and we denote the conformal radius of $D$ around $z$ by $\crad(z,D)$.

We define a grid of points
\begin{multline}\label{nc:eq:fw_grid}
H(a,M,T) = \biggl\{ x+iy \mid x = \pm aj/8,\ y = a(1+k/8),\ j,k \in \NN \cup \{0\},\\
\abs{x} \le M,\ y \le \sqrt{1+4T} \biggr\} .
\end{multline}
This grid is chosen so that we have $\dist(z,H(a,M,T)) < \frac{a}{8}$ for every $z \in [-M,M] \times [a,\sqrt{1+4T}]$.

A consequence of Koebe's distortion theorem is that the (forward) Loewner flows started from this grid of points already contain all the information on the derivative of the conformal maps $\hat f_t = g_t^{-1}(W_t+\cdot)$. The following lemma is purely deterministic and holds for any continuous driving function $W$. We write $\norm{W}_{\infty, [0,T]}$ for the supremum of $\abs{W}$ on the interval $[0,T]$. Recall the definition of $Z_t(z) = R_t + iI_t$ from \eqref{RI-def}.

\begin{lemma}[See Lemma~2.3 in \cite{yuan-psivarx} for a proof]\label{nc:thm:fw_grid}
Let $\delta \in {]0,1]}$, $u > 0$ and suppose $|\hat f_t'(i\delta)| \ge u$ for some $t \in [0,T]$. Then there exists $z \in H(u\delta,\norm{W}_{\infty;[0,T]},T)$ such that
\[ \quad |Z_{t}(z)-i\delta| \le \frac{\delta}{2} \quad \text{and} \quad |g_{t}'(z)| \le \frac{80}{27}\,\frac{1}{u}, \]
where $H(a,M,T)$ is given by \eqref{nc:eq:fw_grid}.
\end{lemma}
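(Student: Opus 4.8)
The strategy is to run the forward Loewner flow from every point of the grid $H(u\delta, \norm{\xi}_{\infty;[0,T]}, T)$ and show that at least one of them lands near $i\delta$ at time $t$ with controlled derivative. Set $z_0 = \hat f_t(i\delta)$, so that $g_t(z_0) = i\delta + \xi(t)$ and $Z_t(z_0) = i\delta$, and $|g_t'(z_0)| = |\hat f_t'(i\delta)|^{-1} \le u^{-1}$. First I would locate $z_0$ inside the rectangle the grid covers. Since $g_t$ maps $H_t$ into $\HH$, the Schwarz-lemma bound from the preliminaries gives $\Im z_0 = \Im \hat f_t(i\delta) \ge \delta |\hat f_t'(i\delta)| / (\text{growth factor})$; more simply, $|\hat f_t'(i\delta)| \le \frac{\sqrt{\delta^2+4t}}{\delta} \le \frac{\sqrt{1+4T}}{\delta}$ forces $\Im z_0 \ge u\delta / \sqrt{1+4T}$ after using $|g_t'(z_0)| \le u^{-1}$ together with $|g_t'(z_0)| \ge \Im(g_t(z_0))/\Im z_0 \cdot (\text{lower Schwarz bound})$ — here I need the reverse inequality $|g_t'(z_0)| \geq \Im z_0^{-1}\,\delta\,(\ldots)$; actually the clean route is: $z_0 = \hat f_t(i\delta)$ means $i\delta = g_t(z_0) - \xi(t)$, and applying the Schwarz bound to $\hat f_t = g_t^{-1}(\cdot + \xi(t))$ at the point $i\delta$ gives $\Im z_0 \ge \delta |\hat f_t'(i\delta)| \ge \delta u$ is false in general; instead $|\hat f_t'(i\delta)| \le \Im \hat f_t(i\delta)/\delta$, hence $\Im z_0 \ge \delta u$. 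This last inequality is exactly what I want: it shows $\Im z_0 \in [u\delta, \sqrt{1+4T}]$, the vertical range of the grid. For the horizontal coordinate, $|\Re z_0| \le \norm{\xi}_{\infty;[0,T]}$ should follow because the trace at time $t$ stays within the range of the driving function — or one argues directly that if $|\Re z_0|$ were larger, the real part $X_s(z_0)$ could not have reached $0$ by time $t$ given the dynamics $dX_s = \frac{2X_s}{|Z_s|^2}ds - d\xi(s)$ and $X_t(z_0) = 0$; integrating, $|X_0(z_0)| = |\Re z_0| \le \norm{\xi}_{\infty;[0,T]} + |\text{drift contribution}|$, and since the drift always pushes $X$ away from $0$ with the sign of $X$, once $X$ has a fixed sign it cannot be that the only way to reach $0$... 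I would phrase this as: the drift term has the same sign as $X_s$, so $s \mapsto X_s$ can only cross $0$ when the martingale part moves it there, yielding $|\Re z_0| \le \norm{\xi}_{\infty;[0,T]}$.

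Once $z_0$ is in the rectangle $[-M,M]\times[u\delta,\sqrt{1+4T}]$ with $M = \norm{\xi}_{\infty;[0,T]}$, the grid property guarantees a point $z \in H(u\delta, M, T)$ with $|z - z_0| < \frac{u\delta}{8}$. Now I want to transfer the estimates from $z_0$ to $z$ via the distortion lemma \cref{nc:le:distortion}, applied to the univalent map $f = g_t$ on $H_t$ (with $g = f^{-1} = \hat f_t(\cdot + \xi(t))$ up to the translation). The lemma, applied at the point $g_t(z_0) = i\delta + \xi(t) \in g_t(H_t) = \HH$ with imaginary part $\delta$, says that for every $w$ in the ball $B(i\delta + \xi(t), \frac{1}{8}\delta |g_t^{-1\,\prime}|)$ — wait, the roles: here $f = g_t^{-1}$ shifted, so $f'$ at $i\delta$ is $\hat f_t'(i\delta)$ of modulus $\ge u$, and the ball around $f(i\delta) = z_0$ has radius $\frac{1}{8}\delta |\hat f_t'(i\delta)| \ge \frac{1}{8}\delta u$. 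So every $z$ with $|z - z_0| < \frac{1}{8}u\delta$ lies in that ball, and the lemma yields $|z - z_0| < \delta/2$ (hence $|Z_t(z) - i\delta| = |g_t(z) - \xi(t) - i\delta|$ is controlled — I still need to push this through $g_t$) and $\frac{48}{125} \le |g_t'(z)|/|g_t'(z_0)| \le \frac{80}{27}$, giving $|g_t'(z)| \le \frac{80}{27} u^{-1}$ directly, which is the second claimed bound.

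For the first claimed bound $|Z_t(z) - i\delta| \le \delta/2$: I have $z$ close to $z_0$, and I want $g_t(z)$ close to $g_t(z_0) = i\delta + \xi(t)$, i.e. $Z_t(z) = g_t(z) - \xi(t)$ close to $i\delta$. This is again a distortion/Koebe estimate but now for the map $g_t$ itself in the other direction — or more cleanly, I apply \cref{nc:le:distortion} once more but reading off the displacement bound: the lemma literally states $|g(w) - z_\ast| < \Im z_\ast / 2$ where $g = f^{-1}$; taking $f = g_t^{-1}(\cdot+\xi(t))$ so $g = g_t(\cdot) - \xi(t)$, $z_\ast = i\delta$, $\Im z_\ast = \delta$, and $w$ ranging over $B(f(i\delta), \frac18 \delta |f'(i\delta)|) = B(z_0, \frac18 \delta |\hat f_t'(i\delta)|) \supseteq B(z_0, \frac18 u\delta) \ni z$, we get exactly $|Z_t(z) - i\delta| = |g_t(z) - \xi(t) - i\delta| < \delta/2$. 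So in fact a single application of \cref{nc:le:distortion} with the correct identification of $f$ delivers both conclusions simultaneously; the only real work is the preliminary step of showing $z_0 = \hat f_t(i\delta)$ lies in the rectangle the grid was built to cover.

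**Main obstacle.** The delicate point is the a priori localization of $z_0 = \hat f_t(i\delta)$, specifically the bound $|\Re z_0| \le \norm{\xi}_{\infty;[0,T]}$: one must argue from the sign of the drift in $dX_s = \frac{2X_s}{X_s^2+Y_s^2}\,ds - d\xi(s)$ that the flow started outside the $\xi$-range cannot be steered to $X_t = 0$, and handle the continuous-time subtlety (the drift is singular as $|Z_s| \to 0$, though $Y_s$ stays positive for $s < t$). The vertical bound $\Im z_0 \in [u\delta, \sqrt{1+4T}]$ is comparatively routine, coming from the Schwarz lemma $|\hat f_t'(i\delta)| \le \Im\hat f_t(i\delta)/\delta$ for the lower end and $Y_t(z_0) = \delta \le \sqrt{(\Im z_0)^2+4t}$ — wait, that gives $\Im z_0 \ge \sqrt{\delta^2 - 4t}$ which can be vacuous; the correct upper bound $\Im z_0 \le \sqrt{1+4T}$ instead needs $\Im z_0 = \Im \hat f_t(i\delta) \le \sqrt{\delta^2 + 4t} \le \sqrt{1+4T}$, using that $\Im$ of the forward flow can only decrease, equivalently $\Im \hat f_t$ is controlled by the same Schwarz-type bound $\Im \hat f_t(i\delta) \le \sqrt{\delta^2 + 4t}$ applied to $\hat f_t \colon \HH \to \HH$. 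Assembling these, $z_0$ lies in $[-M,M]\times[u\delta, \sqrt{1+4T}]$ and the grid spacing finishes it.
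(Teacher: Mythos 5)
Your proposal is correct and follows essentially the same route as the paper's proof: locate $z_*=\hat f_t(i\delta)$ in the rectangle $[-M,M]\times[u\delta,\sqrt{1+4T}]$ (Schwarz lemma for the lower bound on $\Im z_*$, the Loewner equation for the two upper bounds, including the drift-sign argument for $|\Re z_*|\le\norm{\xi}_\infty$, which the paper leaves to one sentence), take the grid point within $\tfrac18 u\delta$ of $z_*$, and apply \cref{nc:le:distortion} with $f=\hat f_t$ to get both conclusions at once. The only difference is that you spell out the localization details that the paper states without proof, and after some false starts you land on the correct identifications.
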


\begin{remark}
For later reference, let us note here that the condition $|Z_{t}(z)-i\delta| \le \delta/2$ implies in particular
\[
I_t(z) \in [\delta/2, 3\delta/2] \quad \text{and} \quad \abs*{\frac{R_t(z)}{I_t(z)}} \le 1 .
\]
\end{remark}

Next, we introduce the parametrisation by imaginary value. For $z \in \HH$ and $\delta > 0$, let $\sigma(s) = \sigma(s,z,\delta) = \inf\{ t \in \RR \mid I_t \le \delta e^{-2s} \}$, $s \in \RR$. Note that the $s$-parametrisation is defined such that the flow starts at $s_0 = -\frac{1}{2}\log\frac{y}{\delta}$, while $s=0$ corresponds to the time $t$ when $I_t(z)=\delta$. 
We have the following representations
\[
\sigma(s) = \int_{-\frac{1}{2}\log\frac{y}{\delta}}^s (R_{\sigma(s')}^2+I_{\sigma(s')}^2)\,ds' 
\]
and
\[ \partial_s \log |g_{\sigma(s)}'(z)| =  -2 \frac{R_{\sigma(s)}^2-I_{\sigma(s)}^2}{R_{\sigma(s)}^2+I_{\sigma(s)}^2}, \]
and consequently $\abs*{\partial_s \log |g_{\sigma(s)}'(z)|} \le 2$.

\bigskip

Now we consider $W_t = M_t+A_t$ as in Condition \ref{sem-cond}. Let us first consider the case when drift term $A \equiv 0$. We therefore assume that $W_t = \int_0^t \sqrt{\kappa_s} \, dB_s$ for some adapted process $\kappa_s$. 
The moments of $|g_t'(z)|$ can then be studied similarly to the case of the backward flow.

For $F = F(w,y) \in C^2$ we have
\begin{equation}\label{nc:eq:fw_M_ito}
d \left( \abs{g_t'(z)}^\lambda I_t^{-\lambda} F(\frac{R_t}{I_t},I_t) \right) 
= \abs{g_t'(z)}^\lambda I_t^{-\lambda-2} \big( \Lambda_{\kappa_t}F \, dt -\sqrt{\kappa_t}I_t F_w \, dB_t \big) .
\end{equation}
where
\[ \Lambda_\kappa F = \Lambda^{\mathrm{(fw)}}_\kappa F \defeq \frac{4\lambda}{(1+w^2)^2}F -\frac{2y}{1+w^2}F_y +\frac{4w}{1+w^2}F_w +\frac{\kappa}{2}F_{ww} . \]

\begin{lemma}
The function $F(w,y) = (1+w^2)^{r/2} y^{\zeta+\lambda}$ satisfies $\Lambda^{\mathrm{(fw)}}_\kappa F \le 0$ on $\HH$ if and only if $\lambda-\zeta \le -\frac{r\kappa}{4}$ and $\lambda+\zeta \ge 2r-\frac{r\kappa}{4}+\frac{r^2 \kappa}{4}$. 
\end{lemma}

\begin{remark}\label{nc:rm:fw_smarter_supersol}
Here again the regularity of $\gamma$ that can be proven is directly related to the exponents $\lambda,\zeta$ (with some restrictions on $r$). So one may again ask for sharper supersolutions. In contrast to the backward case, we had to modify the exponents in $F$ also in the case $\bar\kappa < 8$, so optimal regularity of $\gamma$ is not clear in that case either. We believe that its regularity should be the same as for SLE$_{\kappa_*}$ where $\kappa_* = \bar\kappa$ in the case $\bar\kappa < 8$, and $\kappa_* = \ubar\kappa$ in the case $\ubar\kappa > 8$.

Under certain conditions on $\kappa_*$, $\bar\kappa-\ubar\kappa$, and $r$, we can find supersolutions to the equation $\sup_{\kappa \in [\ubar\kappa,\bar\kappa]} \Lambda_\kappa F \le 0$ that are of the form
\[ F(w,y) = y^{\zeta+\lambda}(1+w^2)^{r/2}\exp(g(w)) \]
with $\lambda,\zeta$ chosen according to Remark \ref{nc:rm:fw_parameters} with $\kappa = \kappa_*$ and a bounded function $g$. More precisely, we can pick $g$ such that $g'$ is of the form
\[ g'(w) = 
\begin{cases}
-\alpha_1 w & \text{for } w \le w_0,\\
-\alpha_2 w^{-1-\varepsilon} & \text{for } w \ge w_0.
\end{cases}
\]
This works whenever $\bar\kappa-\ubar\kappa$ is sufficiently small (depending on $\kappa_*,r$). Again, we did not succeed in making this work in general.
\end{remark}

\begin{corollary}\label{nc:thm:fw_supermartingale}
Suppose $A \equiv 0$ and $\kappa_t \in [\ubar\kappa,\bar\kappa]$ for all $t$. Let $r,\lambda,\zeta$ be chosen such that $\Lambda_\kappa F \le 0$ for all $\kappa \in [\ubar\kappa,\bar\kappa]$. Then the process
\[ M_t = |g_t'(z)|^\lambda I_t^\zeta (1+R_t^2/I_t^2)^{r/2}, \quad t \ge 0 \]
is a supermartingale.
\end{corollary}

\begin{remark}\label{nc:rm:fw_parameters}
In case of constant $\kappa$, i.e. $\ubar\kappa = \bar\kappa$, we can take
\[ \begin{split}\label{nc:eq:fw_constant_kappa_parameters}
\lambda &= r-\frac{r\kappa}{4}+\frac{r^2\kappa}{8} ,\\
\zeta &= r+\frac{r^2\kappa}{8} .
\end{split} \]
In that case, $\Lambda_\kappa F = 0$ and $(M_t)$ is a martingale when stopped before the hull hits a small ball around $z$.
\end{remark}

\begin{proof}
Let $F(w,y)=(1+w^2)^{r/2} y^{\zeta+\lambda}$ as above. Then $M_t =\abs{g_t'(z)}^\lambda I_t^{-\lambda} F(R_t/I_t,I_t)$. By \eqref{nc:eq:fw_M_ito} and our assumption $\Lambda_{\kappa_t}F \le 0$, we have that $(M_t)$ is a non-negative local supermartingale, and therefore a supermartingale.
\end{proof}

Recall that by \cref{nc:thm:fw_grid}, if $\abs{\hat f_t'(i\delta)} \ge u$ for some $t \in [0,T]$, then we can find $z \in H(u\delta,\norm{W}_{\infty;[0,T]},T)$ that satisfies the property stated in that lemma. Note that for such $z$, we have $\sigma(s,z,\delta) = t$ for some $s \in [-1,1]$. In particular, $\abs{g_{\sigma(s)}'(z)} \lesssim \frac{1}{u}$ and $\abs*{\frac{R_{\sigma(s)}}{I_{\sigma(s)}}} \le 1$ for some $s \in [-1,1]$.

In case $\lambda < 0$, a lower bound for $|g_t'(z)|$ is equivalent to an upper bound for $|g_t'(z)|^\lambda$. Then
\[ \pr\left( |g_{\sigma(s)}'(z)| \le \frac{1}{u} \text{ and } |R_{\sigma(s)}| \le I_{\sigma(s)} \right) \le u^{\lambda} \ex\left[ |g_{\sigma(s)}'(z)|^\lambda 1_{|R_{\sigma(s)}| \le I_{\sigma(s)}} \right]  \]
for fixed $s$. Moreover, since $\abs*{\partial_s \log |g_{\sigma(s)}'(z)|} \le 2$, we have $\dfrac{|g_{\sigma(s)}'(z)|}{|g_{\sigma(0)}'(z)|} \in [e^{-2}, e^2]$ for all $s \in [-1,1]$.

Let $S = S(z,\delta) = \inf\{ s \in [-1,1] \mid |R_{\sigma(s)}| \le I_{\sigma(s)} \} \wedge 2$. Together with \cref{nc:thm:fw_supermartingale}, we then have (for any $\lambda \in \RR$)
\begin{align}
\ex\left[ |g_{\sigma(S)}'(z)|^\lambda 1_{S \le 1} \right] 
&\asymp \delta^{-\zeta} \ex\left[ M_{\sigma(S)} 1_{S \le 1} \right] \nonumber\\
&\le \delta^{-\zeta} M_0 \nonumber\\
&\le \delta^{-\zeta} y^\zeta (1+x^2/y^2)^{r/2} \label{nc:eq:fw_gprime_moment}
\end{align}
and consequently (for $\lambda \le 0$)
\begin{align}
&\pr\left( |g_{\sigma(s)}'(z)| \le \frac{1}{u} \text{ and } |R_{\sigma(s)}| \le I_{\sigma(s)} \text{ for some } s \in [-1,1] \right) \nonumber\\
&\quad \lesssim u^\lambda \ex\left[ |g_{\sigma(S)}'(z)|^\lambda 1_{S \le 1} \right] \nonumber\\
&\quad \lesssim u^\lambda \delta^{-\zeta} y^\zeta (1+x^2/y^2)^{r/2} . \label{nc:eq:fw_gridpoint_probability}
\end{align}

\begin{proposition}\label{nc:thm:fw_fprime_probability}
Suppose $r,\lambda,\zeta$ are chosen according to \cref{nc:thm:fw_supermartingale} and $\lambda \le 0$. Then there exists a constant $C < \infty$, depending on $r,\zeta,\lambda,T,M$, such that for $\delta, u > 0$ we have
\begin{multline*}
    \pr ( |\hat f_t'(i\delta)| \ge u \text{ for some } t \in [0,T] ,\ \norm{W}_{\infty, [0,T]} \le M) \\
    \le \begin{cases}
    C u^{\zeta+\lambda} & \text{if } r < -1,\ \zeta+1 < -1\\
    C u^{\lambda-2} \delta^{-\zeta-2} & \text{if } r < -1,\ \zeta+1 > -1,\\
    C u^{\zeta+\lambda-(r+1)} \delta^{-(r+1)} & \text{if } r > -1,\ \zeta-r < -1\\
    C u^{\lambda-2} \delta^{-\zeta-2} & \text{if } r > -1,\ \zeta-r > -1.
    \end{cases}
\end{multline*}
\end{proposition}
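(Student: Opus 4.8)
The plan is to combine the deterministic reduction of \cref{nc:thm:fw_grid} with a union bound over the grid and the single-point estimate \eqref{nc:eq:fw_gridpoint_probability}; this turns the statement into a purely arithmetic estimate of a sum over $H(u\delta,M,T)$.

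First I would argue that, on the event in question, $\norm{\xi}_{[0,T]} \le M$, so the grid furnished by \cref{nc:thm:fw_grid} is contained in $H(u\delta,M,T)$; hence there are a grid point $z = x+iy \in H(u\delta,M,T)$ and a time $t \in [0,T]$ with $\abs{g_t'(z)} \le \tfrac{80}{27}\tfrac1u$ and, by the remark after \cref{nc:thm:fw_grid}, $Y_t(z) \in [\delta/2,3\delta/2]$, $\abs{X_t(z)} \le Y_t(z)$, and such a $t$ equals $\sigma(s,z,\delta)$ for some $s \in [-1,1]$. Writing $\tfrac{80}{27}\tfrac1u = \tfrac{1}{(27/80)u}$ and applying \eqref{nc:eq:fw_gridpoint_probability} with $u$ replaced by $\tfrac{27}{80}u$ (which changes only the implicit constant and already uses $\lambda \le 0$ and \cref{nc:thm:fw_supermartingale}), a union bound gives
\begin{equation*}
\pr\big( \abs{\hat f_t'(i\delta)} \ge u \text{ for some } t \in [0,T],\ \norm{\xi}_{[0,T]} \le M \big)
\;\lesssim\; u^\lambda \delta^{-\zeta} \, \Sigma ,
\end{equation*}
where $\Sigma \defeq \sum_{z=x+iy \in H(u\delta,M,T)} y^\zeta (1+x^2/y^2)^{r/2}$. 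It remains to bound $\Sigma$ deterministically, with constants allowed to depend on $r,\zeta,\lambda,T,M$ as in the statement.

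Set $a = u\delta$. If $a > \sqrt{1+4T}$ the grid is empty; if $a > M$ it is a single column and is handled directly. Otherwise the grid has mesh $a/8$ in each coordinate, lies in $\{\abs x \le M,\ a \le y \le \sqrt{1+4T}\}$, and (since $y \ge a$ everywhere, the integrand varies by at most a bounded factor over each cell) $\Sigma \asymp a^{-2}\int_{a}^{\sqrt{1+4T}} y^\zeta\Big(\int_{-M}^{M}(1+x^2/y^2)^{r/2}\,dx\Big)\,dy$. The inner integral equals $y\int_{-M/y}^{M/y}(1+v^2)^{r/2}\,dv$; for $y\le M$ it is $\asymp y$ when $r<-1$ (the $v$-integral converges) and $\asymp M^{r+1}y^{-r}$ when $r>-1$ (it is dominated by $\abs v\asymp M/y$), while for $y\ge M$ it is $\asymp M$ in either case --- the case $r=-1$ is the logarithmic borderline excluded by hypothesis. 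Substituting, the outer integrand is comparable to $y^p$ with $p=\zeta+1$ (if $r<-1$) or $p=\zeta-r$ (if $r>-1$), up to the $y\ge M$ range whose contribution is $\lesssim a^{-2}$; and $\int_a^{\sqrt{1+4T}}y^p\,dy$ is $\lesssim a^{p+1}$ when $p<-1$ and $\lesssim 1$ when $p>-1$ (the values $\zeta+1=-1$ and $\zeta-r=-1$ being the remaining excluded borderlines). Using only $a \le \sqrt{1+4T}$ to see that the discarded contributions are bounded by the retained one, one gets $\Sigma \lesssim a^{\zeta}$, $a^{-2}$, $a^{\zeta-r-1}$, $a^{-2}$ in the four respective cases; substituting $a=u\delta$ into $u^\lambda\delta^{-\zeta}\Sigma$ yields precisely the four bounds claimed.

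The probabilistic content is entirely carried by \cref{nc:thm:fw_supermartingale} and \eqref{nc:eq:fw_gridpoint_probability}; what is left is the estimate of $\Sigma$, and the main obstacle is bookkeeping rather than depth: correctly identifying which endpoint of the $y$-range dominates (the origin of the sub-case on $\zeta+1$, resp.\ $\zeta-r$), keeping separate the easy degenerate regimes ($a>\sqrt{1+4T}$ and $a>M$), and checking in each of the four cases that the single displayed term dominates the others via $a=u\delta\le\sqrt{1+4T}$. None of these steps is hard, but together they are exactly where the four-way split in the statement comes from.
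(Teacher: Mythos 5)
Your proposal is correct and follows essentially the same route as the paper: reduce via \cref{nc:thm:fw_grid} to grid points, apply a union bound with the single-point estimate \eqref{nc:eq:fw_gridpoint_probability} (the constant $80/27$ being absorbed since $\lambda \le 0$), and then bound $\sum_{z \in H(u\delta,M,T)} y^\zeta(1+x^2/y^2)^{r/2}$, which is exactly the content of \cref{nc:le:fw_sum_grid}. The only difference is that you evaluate this sum by comparison with an integral (plus the degenerate regimes $a>\sqrt{1+4T}$, $a>M$) rather than by the paper's direct summation in $j$ then $k$, which is an immaterial variation.
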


\begin{proof}
With \cref{nc:thm:fw_grid}, we only need to sum up \eqref{nc:eq:fw_gridpoint_probability} for all points $z \in H(u\delta,M,T)$. The result follows from following \cref{nc:le:fw_sum_grid}. 
\end{proof}

\begin{lemma}[See Lemma~2.6 in \cite{yuan-psivarx} for a proof]\label{nc:le:fw_sum_grid}
Let $r,\zeta \in \RR$, $M,T > 0$, $a > 0$. Then there exists $C < \infty$ depending on $r,\zeta,M,T$ such that
\[ \sum_{z \in H(a,M,T)} y^\zeta(1+x^2/y^2)^{r/2} \le 
\begin{cases}
Ca^\zeta & \text{if } r < -1,\ \zeta+1 < -1,\\
Ca^{-2} & \text{if } r < -1,\ \zeta+1 > -1,\\
Ca^{\zeta-r-1} & \text{if } r > -1,\ \zeta-r < -1,\\
Ca^{-2} & \text{if } r > -1,\ \zeta-r > -1.
\end{cases} \]
\end{lemma}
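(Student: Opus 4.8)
The plan is to estimate the sum $\sum_{z\in H(a,M,T)} y^\zeta(1+x^2/y^2)^{r/2}$ by separating the $x$- and $y$-sums. Recall the grid $H(a,M,T)$ consists of points $x=\pm aj/8$, $y=a(1+k/8)$ for $j,k\in\NN\cup\{0\}$ with $\abs{x}\le M$ and $y\le\sqrt{1+4T}$. So the $y$-values range over a geometric-like arithmetic progression with $a\le y\le\sqrt{1+4T}$ (finitely many steps, at most $\sim (\sqrt{1+4T}/a)\cdot 8$ of them), and the $x$-values over an arithmetic progression of spacing $a/8$ from $0$ up to $M$.

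First I would fix $y$ and carry out the $x$-sum. Writing $x=aj/8$, we have $\sum_{j\,:\,aj/8\le M}(1+ (aj/(8y))^2)^{r/2}$, which (up to the constant factor for $\pm$) is a Riemann-sum approximation to $\frac{8y}{a}\int_0^{M/y}(1+t^2)^{r/2}\,dt$. The behaviour of this integral is the crux: for $r<-1$ the integral $\int_0^\infty(1+t^2)^{r/2}\,dt$ converges, so the $x$-sum is $\asymp y/a$ (with a constant depending on $r$ only); for $r>-1$ the integral up to $M/y$ grows like $(M/y)^{r+1}$, so the $x$-sum is $\asymp \frac{y}{a}(M/y)^{r+1}=\frac{M^{r+1}}{a}y^{-r}$ (constant depending on $r,M$). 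One must also keep the lone $j=0$ term, which contributes $O(y/a)$ in both cases and is harmless; and when $y$ is comparable to $a$ the "Riemann sum" heuristic must be replaced by the trivial bound that there are at most $\sim M/a$ terms each $\le 1$, giving $\lesssim M/a \lesssim (M/a)(y/a)$ since $y\ge a$ — so in fact $y/a \ge 1$ always and the bound $x\text{-sum}\lesssim y/a$ (resp.\ $\lesssim \frac{M^{r+1}}{a}y^{-r}$) holds uniformly. Combining with the $y^\zeta$ weight, the double sum becomes, up to constants, $\frac1a\sum_y y^{\zeta+1}$ when $r<-1$, and $\frac{M^{r+1}}{a}\sum_y y^{\zeta-r}$ when $r>-1$.

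Next I would do the $y$-sum, $\sum_{y} y^{\eta}$ where $y$ runs over $\{a(1+k/8): k\ge 0,\ y\le\sqrt{1+4T}\}$ and $\eta=\zeta+1$ (case $r<-1$) or $\eta=\zeta-r$ (case $r>-1$). This is again comparable to $\frac{8}{a}\int_a^{\sqrt{1+4T}} s^{\eta}\,ds$ plus the endpoint term $a^\eta$. If $\eta<-1$ the integral converges at $\infty$ and is dominated near $s=a$, giving $\sum_y y^\eta \asymp a^\eta$ (constant depending on $\eta$). If $\eta>-1$ the integral is dominated near the upper limit $\sqrt{1+4T}$, giving $\sum_y y^\eta\asymp \frac1a (1+4T)^{(\eta+1)/2}\lesssim \frac1a$ (constant depending on $\eta,T$). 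Plugging these four combinations ($r\lessgtr-1$ times $\eta\lessgtr-1$) into the previous display yields exactly the four cases claimed: $r<-1,\ \zeta+1<-1$ gives $\frac1a\cdot a^{\zeta+1}=a^\zeta$; $r<-1,\ \zeta+1>-1$ gives $\frac1a\cdot\frac1a=a^{-2}$; $r>-1,\ \zeta-r<-1$ gives $\frac{M^{r+1}}{a}\cdot a^{\zeta-r}=C a^{\zeta-r-1}$; $r>-1,\ \zeta-r>-1$ gives $\frac{M^{r+1}}{a}\cdot\frac1a=Ca^{-2}$. The constants depend on $r,\zeta,M,T$ as asserted.

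The main obstacle, such as it is, is bookkeeping rather than conceptual: making the Riemann-sum comparisons rigorous and uniform in $a$ (in particular handling the regime $a$ close to $\sqrt{1+4T}$, where the grid has $O(1)$ rows, and the isolated $x=0$ column), and tracking that all implied constants genuinely depend only on $r,\zeta,M,T$ and not on $a$. The borderline cases $\zeta+1=-1$ and $\zeta-r=-1$ are excluded by hypothesis, so no logarithmic corrections appear and I do not need to treat them. I would also note the monotonicity fact $1+x^2/y^2$ is decreasing in $y$ for fixed $x$, which lets one bound a row by comparison with a cleaner integral when needed, but this is a minor convenience.
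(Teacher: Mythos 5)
Your proposal is correct and follows essentially the same route as the paper: sum over the $x$-direction first by comparison with $\int_0^{M/y}(1+t^2)^{r/2}\,dt$ (splitting at $r=-1$), then sum the resulting $y$-powers by comparison with $\int_a^{\sqrt{1+4T}}s^\eta\,ds$ (splitting at $\eta=-1$), yielding the four cases. The paper merely rescales to $x_j=aj$, $y_k=ak$ before making the same integral comparisons, and it glosses over the same boundary bookkeeping (few rows, $y$ comparable to the upper cutoff) that you flag as routine.
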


\begin{corollary}
Suppose $r,\lambda,\zeta$ are chosen according to \cref{nc:thm:fw_supermartingale} and $\lambda \le 0$. Let $\beta > \frac{\zeta+2}{2-\lambda} \vee \frac{r+1}{r+1-\zeta-\lambda} \vee 0$. Then with probability $1$ there exists some (random) $y_0 > 0$ such that
\[ |\hat f_t'(i\delta)| \le \delta^{-\beta} \]
for all $\delta \in {]0,y_0]}$ and $t \in [0,T]$.
\end{corollary}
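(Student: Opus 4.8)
The plan is to run a Borel--Cantelli argument along a geometric sequence of scales $\delta_n = 2^{-n}$, feeding in the tail bound of \cref{nc:thm:fw_fprime_probability}, and then to fill in the intermediate scales by Koebe distortion.

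First I would fix an integer $M$ and choose an auxiliary exponent $\beta'$ with
\[ \tfrac{\zeta+2}{2-\lambda} \vee \tfrac{r+1}{r+1-\zeta-\lambda} \vee 0 \;<\; \beta' \;<\; \beta , \]
which is possible by hypothesis. Applying \cref{nc:thm:fw_fprime_probability} with $\delta = \delta_n = 2^{-n}$ and $u = u_n = \delta_n^{-\beta'} = 2^{n\beta'}$ bounds, in each of the four cases, the probability of
\[ A_n^M \defeq \{ |\hat f_t'(i\delta_n)| \ge u_n \text{ for some } t \in [0,T] \} \cap \{ \norm{\xi}_{[0,T]} \le M \} \]
by an expression of the form $C \, 2^{-cn}$ with $c>0$: in the case $r<-1$, $\zeta+1<-1$ one uses $\zeta+\lambda < -2 < 0$ (which holds since $\zeta<-2$ and $\lambda\le 0$); in the two cases with bound $u^{\lambda-2}\delta^{-\zeta-2}$ one uses $\beta'(2-\lambda)>\zeta+2$ together with $2-\lambda>0$; and in the case $r>-1$, $\zeta-r<-1$ one uses $\beta'(r+1-\zeta-\lambda)>r+1$ together with the fact that $\zeta-r<-1$ and $\lambda\le 0$ force $r+1-\zeta-\lambda > 2 > 0$. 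Hence $\sum_n \pr(A_n^M) < \infty$, so by Borel--Cantelli, almost surely only finitely many $A_n^M$ occur.

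Next I would combine over $M$ and interpolate. On the full-measure event that $\norm{\xi}_{[0,T]} < \infty$ and that for every $M \in \NN$ only finitely many $A_n^M$ occur, fix an integer $M > \norm{\xi}_{[0,T]}$; then there is a random $N$ with $|\hat f_t'(i2^{-n})| < 2^{n\beta'}$ for all $n \ge N$ and all $t \in [0,T]$. For $\delta \in [2^{-n-1},2^{-n}]$ the point $i\delta$ lies well inside the disc $B(i2^{-n}, \tfrac34 2^{-n}) \subset \HH$, so Koebe's distortion theorem (as used in \cref{nc:le:distortion}) gives an absolute constant $C_0$ with $|\hat f_t'(i\delta)| \le C_0 |\hat f_t'(i2^{-n})| \le C_0 (2^{-n})^{-\beta'} \le C_0 \, \delta^{-\beta'}$ for $n \ge N$. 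Thus $|\hat f_t'(i\delta)| \le C_0 \delta^{-\beta'}$ for all $\delta \le 2^{-N}$ and $t \in [0,T]$; since $\beta' < \beta$, we get $C_0\delta^{-\beta'} \le \delta^{-\beta}$ whenever $\delta \le C_0^{-1/(\beta-\beta')}$, so $y_0 \defeq \min\{2^{-N}, C_0^{-1/(\beta-\beta')}\}$ does the job.

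With \cref{nc:thm:fw_fprime_probability} already in hand, the corollary is mostly bookkeeping, and the only point requiring care is the case analysis above: for the fixed $r,\zeta,\lambda$ coming from \cref{nc:thm:fw_supermartingale} with $\lambda\le 0$, one must check that each of the four tail bounds is a genuinely decaying power of $\delta_n$ exactly under the stated lower bound on $\beta$ — in particular that $2-\lambda>0$ always, and that in the one case where it is used $r+1-\zeta-\lambda>0$ automatically, so that the thresholds are meaningful and the Borel--Cantelli series converges. The $M\to\infty$ step and the Koebe-distortion interpolation are routine; the only mild subtlety is that the constant in \cref{nc:thm:fw_fprime_probability} depends on $M$, which is harmless since $M$ is frozen before $n\to\infty$.
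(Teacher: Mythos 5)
Your proof is correct and takes essentially the same route as the paper's: apply \cref{nc:thm:fw_fprime_probability} with $u$ a negative power of $\delta$, check in each of the four cases that the resulting bound decays as a positive power of $\delta$ under the stated lower bound on $\beta$, run Borel--Cantelli along $\delta=2^{-n}$ on the events $\{\norm{\xi}_{[0,T]}\le M\}$, and fill in intermediate scales by Koebe distortion. Your auxiliary exponent $\beta'<\beta$ to absorb the distortion constant is a minor tidying of a detail the paper leaves implicit.
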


\begin{proof}
It suffices to show the claim on the event $\{\norm{W}_{\infty, [0,T]} \le M\}$ for all $M$. By \cref{nc:thm:fw_fprime_probability}
\begin{multline*}
    \pr ( |\hat f_t'(i\delta)| \ge \delta^{-\beta} \text{ for some } t \in [0,T] ,\ \norm{W}_{[0,T]} \le M) \\
    \le \begin{cases}
    C \delta^{-\beta(\zeta+\lambda)} & \text{if } r < -1,\ \zeta+1 < -1\\
    C \delta^{-\beta(\lambda-2)-\zeta-2} & \text{if } r < -1,\ \zeta+1 > -1,\\
    C \delta^{-\beta(\zeta+\lambda-(r+1))-(r+1)} & \text{if } r > -1,\ \zeta-r < -1\\
    C \delta^{-\beta(\lambda-2)-\zeta-2} & \text{if } r > -1,\ \zeta-r > -1.
    \end{cases}
\end{multline*}
Our choice of $\beta$ implies that this probability decays as $\delta \searrow 0$.

For $\delta = 2^{-n}$, $n \to \infty$, the claim then follows from the Borel-Cantelli lemma, and for all other $\delta$ from the Koebe distortion theorem.
\end{proof}

\begin{proof}[Proof of Theorem \ref{main-thm-2}]
When the drift $A\equiv 0$, if we can pick $\beta < 1$ in the previous corollary, then by \cite[Corollary 3.12]{vl-sle-hoelder} the trace exists. This is possible if and only if $\frac{\zeta+2}{2-\lambda} < 1 \iff \zeta+\lambda < 0$.

For better readability, we write down the two cases $0 = \ubar\kappa \le \bar\kappa < 8$ and $8 < \ubar\kappa \le \bar\kappa < \infty$ separately.

First the case $\bar\kappa < 8$. In order to fulfill also the conditions of \cref{nc:thm:fw_supermartingale}, we need to pick $r$ such that $2r-\frac{r\kappa}{4}+\frac{r^2 \kappa}{4} < 0$ for all $\kappa \in [0,\bar\kappa] \iff r \in {\left] 1-\frac{8}{\bar\kappa}, 0 \right[}$. This is a non-empty interval if and only if $\bar\kappa < 8$.

Next, we need to fulfill $\lambda-\zeta \le -\frac{r\kappa}{4}$. Since we allow $\kappa$ to be as small as $0$, and this condition becomes $\lambda-\zeta \le 0$.

In summary, we need to pick $\zeta$, $\lambda$ such that $\lambda \le 0$, $\zeta+\lambda \in {\left[ 2r-\frac{r\bar\kappa}{4}+\frac{r^2 \bar\kappa}{4}, 0 \right[}$, and $\zeta-\lambda \ge 0$. This can be done by choosing $\zeta = \lambda = r-\frac{r\bar\kappa}{8}+\frac{r^2 \bar\kappa}{8}$.

Now the case $\ubar\kappa > 8$. Again, we need to pick $r$ such that $2r-\frac{r\kappa}{4}+\frac{r^2 \kappa}{4} < 0$ for all $\kappa \in [\ubar\kappa,\bar\kappa] \iff r \in {\left] 0, 1-\frac{8}{\ubar\kappa} \right[}$. This is a non-empty interval if and only if $\ubar\kappa > 8$.

The condition $\lambda-\zeta \le -\frac{r\kappa}{4}$ for all $\kappa \in [\ubar\kappa,\bar\kappa]$ now becomes $\lambda-\zeta \le -\frac{r\bar\kappa}{4}$. 

In summary, we need to pick $\zeta$, $\lambda$ such that $\lambda \le 0$, $\zeta+\lambda \in {\left[ 2r-\frac{r\ubar\kappa}{4}+\frac{r^2 \ubar\kappa}{4}, 0 \right[}$, and $\zeta-\lambda \ge \frac{r\bar\kappa}{4}$. This can be done by choosing $\zeta = r-\frac{r\ubar\kappa}{8}+\frac{r\bar\kappa}{8}+\frac{r^2 \ubar\kappa}{8}$, $\lambda = r-\frac{r\ubar\kappa}{8}-\frac{r\bar\kappa}{8}+\frac{r^2 \ubar\kappa}{8}$. 

\smallskip

For non-zero $A$, the above argument remains valid as long as in \eqref{nc:eq:fw_gprime_moment}, we verify the estimate $\E[M_{\sigma(S)}] \lesssim M_0$ for $r, \lambda, \zeta$ chosen as above. This can be verified similarly as in the proof of Theorem \ref{main-thm-1}. By the same argument, we can assume \eqref{energy} holds. We then just need to note that, similarly as \eqref{h'=exp}, the equation \eqref{nc:eq:fw_M_ito} also yields an identity for $\abs{g_t'(z)}^\lambda I_t^{-\lambda} F(\frac{R_t}{I_t},I_t)$ in terms of exponential of a martingale plus some an additional integral depending on $A$. At the cost of slightly changing parameters $r,\lambda, \zeta$, this additional integral can be handled similarly as in the proof of Theorem \ref{main-thm-1}. In view of being repetitive, we leave the details to interested readers. 
\end{proof}

\subsection{Proof of simpleness of $\gamma$}  \label{proof-thm-2-simple}

The proof of simpleness of $\gamma$ for the forward case is similar to that of \cite{rs-sle}. Following the steps of the proof of \cite[Theorem 6.1]{rs-sle}, we note that it is sufficient to prove that if $Z_t(x)$ denote the forward flow started at $x >0$, then $Z_t(x) >0$ for all time $t>0$. More precisely, let 
\[ L^x = \inf\{t>0 \mid Z_t(x) = 0\}.\]

We then claim the following. 

\begin{proposition}
If $W$ is semimartingale satisfying Condition \ref{sem-cond} either with $\bar\kappa \leq 4$ and $A\equiv 0$ or with $\bar\kappa <4$ and non-zero $A$ satisfying \eqref{energy}, then $\P[ L^x = +\infty] =1$.
\end{proposition}

\begin{proof}
The proof is a simple adaptation of the proof for Bessel processes. For $\epsilon < x< M$, let $L_\epsilon = \inf\{t>0 \mid Z_t(x) = \epsilon\}, $ and $L_M = \inf\{t>0 \mid Z_t(x) = M\}$. 

\smallskip

Let us first consider the case $A\equiv 0$ and $\bar\kappa \leq 4$. Applying It\^o formula to $\log(Z_t)$, note that 
\begin{equation}\label{sub-mart}
d\log(Z_t) = \frac{1}{Z_t}dW_t + (2 - \frac{\kappa_t}{2}) \frac{1}{Z_t^2}dt. 
\end{equation}

Since $\bar\kappa \leq 4$, this implies that $\log(Z_t)$ is a submartingale for $t\leq L_\epsilon \wedge L_M$. It therefore implies that $\E[\log (Z_{L_\epsilon \wedge L_M})] \geq \log (x)$. Then, a simple computation implies that 
\[ \P[ L_\epsilon  > L_M] \geq \frac{\log(x)- \log(\epsilon)}{\log(M)- \log(\epsilon)}.\]
Letting $\epsilon \to 0+$ and $M \to \infty$ proves the claim. 

\smallskip

For $\bar\kappa <4$ with non-zero $A$, note again using It\^o formula that 
\[ d\log(Z_t) + b \dot{A}_t^2dt = \frac{1}{Z_t}dW_t + \frac{1}{Z_t}\dot{A}_tdt + (2 - \frac{\kappa_t}{2}) \frac{1}{Z_t^2}dt + b \dot{A}_t^2dt.\]
Since $\bar\kappa <4$, we can choose $b$ large enough such that 
\[ \frac{1}{Z_t}\dot{A}_t + (2 - \frac{\kappa_t}{2}) \frac{1}{Z_t^2} + b \dot{A}_t^2 \geq 0 \]  
by using $\abs*{\frac{1}{Z_t}\dot{A}_t} \le \varepsilon\frac{1}{Z_t^2}+\frac{1}{4\varepsilon}\dot{A}_t^2$ with some small $\varepsilon > 0$.

Therefore, $\log(Z_t) + b \int_0^t \dot{A}_r^2dr$ is a submartingale. Noting that $\E[\int_0^T \dot{A}_r^2 dr ] < \infty$, repeating the same argument as above completes the proof. 
\end{proof}

\begin{proof}[Proof of simpleness of $\gamma$]
Knowing the above proposition, the rest of the proof is similar to the one in \cite{rs-sle}. We therefore only give a brief sketch. Suppose for some $t_1< t_2$, $\gamma_{t_1}= \gamma_{t_2}$. We then pick a rational point $s \in (t_1, t_2)$ and apply the mapping out function $g_s(\cdot)$. Note that the mapped out family of compact hulls is driven by $W_{t+s} - W_s$, which is also a semimartingale satisfying condition \ref{sem-cond}. Therefore, the mapped out compact hulls are also generated by a curve, call it $\gamma^s_t$. Furthermore, by the assumption $\gamma_{t_1}= \gamma_{t_2}$, $\gamma^s_{t_2-s} \in \R$. But this implies that the forward flow started at $\gamma^s_{t_2-s}$ hits zero in finite time. This is a contradiction to the above proposition. 
\end{proof}

\section{Proof of Corollary \ref{imp-cor}} \label{proof-imp-cor}

The Corollary \ref{imp-cor} follows at once if we verify that for $\alpha> 3/2$ and $t$ small enough, $|B_t|^\alpha$ satisfies the condition in Theorem \ref{main-thm-1}, \ref{main-thm-2} required for the existence of a simple $\gamma$. To this end, note that for $\alpha>1$, using the It\^o-Tanaka formula,
\begin{equation}
|B_t|^\alpha = \int_0^t \alpha\hspace{1mm} \mbox{sgn}(B_s)|B_s|^{\alpha -1}dB_s + \frac{\alpha(\alpha-1)}{2} \int_0^t |B_s|^{\alpha -2}ds. 
\end{equation}
Also note using the occupation times formula that 
\[ \int_0^t |B_s|^{\alpha -2}ds = \int_{-\infty}^\infty \frac{L_t^a}{|a|^{2-\alpha}}da,\]
where $L_t^a$ is the Brownian local time. It follows that $\int_0^t |B_s|^{\alpha -2}ds < \infty$ a.s. if and only if \footnote{$\int_0^t B_s^{\alpha -2}ds$ is not absolutely convergent for $\alpha \leq 1$. One can however choose a principal value for $\alpha \in (1/2, 1]$, see \cite[p. 236]{ry-stochastic-book} for details.} $\alpha>1$. Similarly, if $A_t =  \int_0^t |B_s|^{\alpha -2}ds$, then 
\[ \int_0^T \dot{A}_s^2 ds =  \int_0^T |B_s|^{2(\alpha -2)}ds < \infty \hspace{2mm} \mbox{a.s.}\] if and only if $2(2-\alpha) < 1$, or equivalently $\alpha > 3/2$.

Let $M_t = \int_0^t \alpha\hspace{1mm} \mbox{sgn}(B_s)|B_s|^{\alpha -1}dB_s$. For $\theta >0$, let $\tau_\theta = \inf \bigl\{t \geq 0\bigm | (\alpha |B_t|^{\alpha-1})^2 \geq \theta\bigr\}$. Then for $\theta < 8$, the Condition~\ref{sem-cond} is satisfied for $M_{t \wedge \tau_\theta}$. Similarly, for $\theta > 4$, the generated curve is simple. Hence, Corollary \ref{imp-cor} follows.

\section{Proof of Theorem \ref{main-thm-3}}  \label{proof-thm-3}

Theorem \ref{main-thm-3} is an easy consequence of the techniques developed above. For the existence of a solution $\varphi_t$, note that \eqref{theta exists} easily implies that $h(s,t,0^{\downarrow}) := \lim_{y\to 0+} h(s,t,iy)$ exists a.s. uniformly in $s,t$. 
Clearly, using the It\^o formula, 
\[ h(s,t,iy)^2 = -y^2 + \int_s^t 2h(s,r,iy) dV_r + \int_s^t (\kappa_r - 4)dr.\]

Letting $y\to 0+$, it easily follows that $\varphi_t = h(s,t,0^{\downarrow})^2$ is a solution to \eqref{Eq2}. 

\smallskip

For the uniqueness, let $\varphi_t$ be any solution to \eqref{Eq2} with some choice of a branch square root $\sqrt{\varphi_t}^b$. Let $\sqrt{\varphi_t}^b = X_t + i Y_t$. It then follows that 
\begin{align}
X_t^2 - Y_t^2 &= 2 \int_0^t X_r dV_r + \int_0^t (\kappa_r - 4)dr, \\
X_tY_t &= \int_0^t Y_r dV_r.
\end{align}

Let $\tau = \inf\{ t>0 \mid \varphi_t \notin [0, \infty)\}$. We want to prove that $\tau =0$ almost surely.  Suppose $\P(\tau >0) > 0$. 
Note that $Y_t =0$ for $t\leq \tau$, which in turn implies 
\begin{equation}\label{when Y=0}
X_t^2 = 2\int_0^t X_r dV_r + \int_0^t (\kappa_r - 4)dr
\end{equation}
for all $t\leq \tau$. Using It\^o formula, for any $\epsilon >0$, 

\[\sqrt{X_t^2 + \epsilon} = \sqrt{\epsilon} + \int_0^t \frac{X_r}{\sqrt{X_r^2 + \epsilon}}dV_r + \int_0^t \frac{\epsilon \kappa_r - 4\epsilon - 4X_r^2}{2(X_r^2 + \epsilon)^{3/2}}dr.\]

As $\epsilon \to 0+$, the term $\int_0^t \frac{X_r}{\sqrt{X_r^2 + \epsilon}}dV_r$ converges uniformly in probability to $\tilde{V}_t := \int_0^t \mbox{sgn}(X_r) 1_{\{X_r \neq 0\}}dV_r$ which is also a semimartingale satisfying condition \ref{sem-cond}. Since $\kappa_t \leq \bar\kappa <4$, note from \eqref{when Y=0} that $X$ cannot be identically zero on $[0,\tau]$. We can therefore pick an interval $[u,v] \subset[0,\tau]$ such that $X_u =0$ and $|X_t| >0$ for all $t\in (u,v]$ (a piece of a $0$-excursion). It then follows that 
\[ |X_t| = \tilde{V}_t - \tilde{V}_u - \int_u^t \frac{2}{|X_r|}dr.\]
Similarly as in the proof of simpleness in Theorem \ref{main-thm-1}, this is a contradiction to Proposition \ref{Proposition 11}. It therefore implies $\tau=0$ a.s. and $Y_t >0$ for all $t>0$, noting that $Y_t > 0$ implies $Y_s > 0$ for all $s>t$.

\smallskip

To prove the uniqueness of $\varphi$, we first claim that for some large enough $L$, there exists a sequence $t_n \to 0+$ such that $|X_{t_n}| \leq LY_{t_n}$. On the contrary, suppose $|X_{t}| > LY_{t}$ for all $t$ small enough. Without loss of generality, assume $X_{t} > LY_{t}$. It then follows that 
\[X_t - X_{\epsilon}= V_t - V_{\epsilon} - \int_\epsilon^t \frac{2X_r}{X_r^2 + Y_2^2}dr.\]
Since $X$ is positive, letting $\epsilon \to 0+$ implies 
\[X_t = V_t - \int_0^t \frac{2X_r}{X_r^2 + Y_2^2}dr \leq V_t - \frac{2L^2}{L^2 + 1}\int_0^t \frac{1}{X_r}dr.\]

Note that Proposition \ref{Proposition 11} remains valid if the constant $-2$ in the backward LDE is replaced by some constant close enough to $-2$. Therefore, if we pick $L$ large enough, using the Gr\"onwall inequality Lemma \ref{Gronwall}, this leads to a contradiction to Proposition \ref{Proposition 11} similarly as above. Finally, to establish the uniqueness of $\varphi$, note that $\sqrt{\varphi_t}= h(t_n, t, \sqrt{\varphi_{t_n}})$.  The estimate \eqref{theta exists} implies that $h(s,t, iy)$ converges uniformly in $s,t$ to $h(s,t,0^{\downarrow})$ as $y\to 0+$. It follows using the distortion theorem that $h(s,t, z)$ also converges uniformly in $s,t$ to $h(s,t,0^{\downarrow})$ as $z\to 0$ non-tangentially with $z\in \{|x|\leq Ly\}$. It therefore follows that $\varphi_t = \lim_{(s,y) \to (0+, 0+)} h(s,t, iy)^2$ which implies the uniqueness of $\varphi$.

\section{Applications} \label{app-sec}

Beside the case of SLEs with Brownian motion as drivers, Loewner chain driven by various non-Brownian drivers have many times appeared in the literature, see e.g.  \cite{cr-le-symmetric-stable} with symmetric $\alpha$-stable drivers in relation to extremal domains of integral means spectrum, \cite{cardy, healey-lawler} with Dyson Brownian motion drivers in relation to $N$-sided radial SLE. Semimartingale drivers are also natural and relevant. In this section we discuss an application of our main results.  
 
\subsection{An Application to stochastic Komatu-Loewner evolutions (SKLEs)}
SKLEs are variants of SLEs in finitely connected domains. For readers' convenience, we start by giving a brief introduction to SKLEs, see \cite{chen-book} and references therein for a detailed account on this subject.

The study of Komatu-Loewner evolutions was initiated by Y. Komatu \cite{komatu} for obtaining a variant of Loewner differential equation in circular slit annuli domains. The corresponding chordal variant in standard slit domains (which are prototypical finitely connected domains) was studied by Bauer and Friedrich \cite{bauer-1}. Chen-Fukushima-Rohde \cite{chen-fukushima-rohde} introduced \textit{Brownian motion with darning (BMD)} to describe the right hand side of the Komatu-Loewner equation as a complex Poisson kernel and established Loewner-Komatu equation as an actual ODE.

A standard slit domain is a domain of the form $\mathbf{H} \setminus \cup_{k=1}^N C_k$, where $C_k$, $1 \leq k\leq N$, are  mutually disjoint horizontal line segments in $\mathbf{H}$. A standard slit domain is characterised by left and right endpoints of its slits which can represented by a $3N$ tuple $\boldsymbol{s} = (\boldsymbol{y}, \boldsymbol{x}, \boldsymbol{x}^r)$,  where $\boldsymbol{y}$ are the heights of the slits and $\boldsymbol{x}, \boldsymbol{x}^r$ contain the real parts of the left and right endpoints of slits, respectively. Let $D$ be a standard slit domain and $\gamma\colon [0,T] \to \overline{D}$ be a simple curve with $\gamma(0) \in \mathbf{R}$ and $\gamma(0,T] \subset D$. For $t\in [0,T]$, let $g_t$ be the unique conformal map from $D\setminus \gamma(0,t]$ onto a standard slit domain $D_t$ satisfying 
\[
g_t(z) = z + \frac{a_t}{z} + o\biggl(\frac{1}{|z|}\biggr) \textrm{ as $z \to \infty$}.
\]
Assuming $a_t =2t$ (which can be obtained via reparametrisation similarly as in chordal Loewner theory), the Komatu-Loewner differential equation describes the evolution of maps $g_t(z)$ and is given by 
\begin{equation}\label{KLE-1}
\frac{dg_t(z)}{dt} = -2\pi \Psi_{D_t }(g_t(z), \xi_t), \textrm{ $g_0(z) = z$},
\end{equation}
where $\Psi_{D_t}(z,\xi)$ is the complex Poisson kernel for the Brownian motion with darning for $D_t$ and $\xi_t = \lim_{z \to \gamma_t }g_t(z)$. The evolution of slits of domains $D_t$ which are encoded by $3N$ tuples $\boldsymbol{s}(t)$ is given by
\begin{equation}\label{KLE-2} 
\frac{d\boldsymbol{s}_j(t)}{dt} = b_j(\boldsymbol{s}(t) - \hat{\xi}_t),
\end{equation}
where $b_j$ are as defined in \cite[equation~(1.5)]{chen-fukushima-suzuki},  and the hat symbol represents horizontal translation of slits, i.e.
\[
(\boldsymbol{y}, \boldsymbol{x}, \boldsymbol{x}^r) - \hat\xi \defeq (\boldsymbol{y}, \boldsymbol{x}-\xi, \boldsymbol{x}^r-\xi) .
\]
Conversely, given any continuous real valued function $\xi_t$, we can solve \eqref{KLE-2} and then \eqref{KLE-1}. It can be shown that for $z\in D$, \eqref{KLE-1} admits a unique solution $g_t(z)$ defined up to a maximal time $t< T_z$. Furthermore, for $F_t := \{ z\in D \mid T_z \leq t \}$, the map $g_t\colon D\setminus F_t \to \mathbf{H}\setminus \boldsymbol{s}(t)$ is a conformal bijection.

To define a variant of SLE in standard slit domains $D$, it is natural to look for families $\{F_t\}_{t \in [0,T]}$ that satisfy \textit{domain Markov property} and \textit{conformal invariance}. Chen-Fukushima \cite{Chen-Fukushima} identified such families by showing that (under certain regularity assumptions) the driving process $\boldsymbol{W}_t = (\xi_t, \boldsymbol{s}(t))$ must be given by solution to the SDE 
\begin{alignat*}{2}
d\xi_t &= \alpha(\boldsymbol{s}(t) - \hat{\xi}_t)dB_t + b(\boldsymbol{s}(t) - \hat{\xi}_t)dt,\\
d\boldsymbol{s}_j(t) &= b_j(\boldsymbol{s}(t) - \hat{\xi}_t)dt,& 1\leq j\leq 3N,
\end{alignat*}
where $B$ is standard Brownian motion and $\alpha(\boldsymbol{s}), b(\boldsymbol{s})$ are locally Lipschitz continuous and homogenous functions of degree $0,-1$, respectively. The family of conformal maps $\{g_t\}_{t\in [0,T]}$ driven by such $\xi_t$ is defined to be SKLE$_{\alpha, b}$. Also see the related work \cite{dapeng-SLE-1} for variants of SLEs in annulus.

Loewner chains driven by semimartingales appear naturally while studying SKLE$_{\alpha, b}$. It was shown in \cite{chen-fukushima-suzuki} that, after a suitable reparametrisation, SKLE$_{\alpha,b}$ has the same distribution as that of a Loewner chain driven by a semimartingale. More precisely, for the SKLE$_{\alpha, b}$ $\{F_t\}_{t\geq 0}$, if we consider the Riemann map $g_t^0\colon \mathbf{H}\setminus F_t \to \mathbf{H}$ with $g_t^0(z) \sim z$ as $z\to \infty$, then 
\[ \frac{dg_t^0(z)}{dt} = \frac{2(\Phi_t'(\xi_t))^2}{g_t^0(z) - W_t},\]
where $\Phi_t = g_t^0\circ g_t^{-1}$ and $W_t = \Phi_t(\xi_t)$ ($\Phi_t$ admits an analytic extension to $D_t \cup \overline{D}_t \cup \partial \mathbf{H}$, see \cite[Lemma~2.1]{chen-fukushima-suzuki}). Moreover, $W_t$ admits a semimartingale decomposition as 
\begin{equation}\begin{split}\label{sem-decop} dW_t = \Phi_t'(\xi_t)\alpha(\boldsymbol{s}(t) &- \hat{\xi}_t)dB_t + \Phi_t'(\xi_t)( b_{\mathrm{BMD}}(\boldsymbol{s}(t) - \hat{\xi}_t) + b(\boldsymbol{s}(t) - \hat{\xi}_t))dt \\ &+ \frac{1}{2}\Phi_t''(\xi_t)((\alpha(\boldsymbol{s}(t) - \hat{\xi}_t))^2 -6)dt,  
\end{split}\end{equation}
where $b_{\mathrm{BMD}}(\boldsymbol{s}) := 2\pi \lim_{z\to 0}\biggl(\Psi_{\boldsymbol{s}}(z,0) + \frac{1}{\pi z}\biggr) $  is the BMD domain constant which expresses the discrepancy between standard slit domain $\mathbf{H}\setminus \boldsymbol{s}$ and $\mathbf{H}$. As a result, by introducing a reparametrisation $\widetilde{F}_t = F_{c^{-1}(2t)}, \widetilde{W}_t = W_{c^{-1}(2t)}, \widetilde{\xi}_t = \xi_{c^{-1}(2t)}, \widetilde{g}_t^0 = g^0_{c^{-1}(2t)}, \widetilde{\Phi}_t = \Phi_{c^{-1}(2t)}$, where $c_t = 2\int_0^t(\Phi_r'(\xi_r))^2dr $, it follows that $\{\widetilde{F}_t\}_{t\geq 0}$ has the same distribution as a Loewner chain driven by semimartingale $\widetilde{W}_t $. A semimartingale decomposition of $\widetilde{W}_t$ can be easily derived from \eqref{sem-decop} and it is given by 
\begin{equation}\begin{split}\label{sem-decop-2}
d\widetilde{W}_t = \alpha(\widetilde{\boldsymbol{s}}(t) &- \hat{\widetilde{\xi}}_t)d\widetilde{B}_t + \widetilde{\Phi}_t'(\widetilde{\xi}_t)^{-1}( b_{\mathrm{BMD}}(\widetilde{\boldsymbol{s}}(t) - \hat{\widetilde{\xi}}_t) + b(\widetilde{\boldsymbol{s}}(t) - \hat{\widetilde{\xi}}_t))dt \\ &+ \frac{1}{2}\widetilde{\Phi}_t''(\widetilde{\xi}_t)\widetilde{\Phi}_t'(\widetilde{\xi}_t)^{-2}((\alpha(\widetilde{\boldsymbol{s}}(t) - \hat{\widetilde{\xi}}_t))^2 -6)dt.
\end{split}\end{equation}
Note that $\Phi_t$ is univalent on $D_t \cup \overline{D}_t \cup \partial \mathbf{H}$ and $\Phi_t'(z)$ is non-vanishing. Moreover, since $b$ and $b_{\mathrm{BMD}}$ are locally Lipschitz, the drift term is bounded whenever $\boldsymbol{s}(t)$ is bounded away from the real axis. Hence, Theorem~\ref{main-thm-2} immediately implies the following result.

\begin{proposition}
Let $\alpha$, $b$ be locally Lipschitz continuous and homogeneous of degree $0$, $-1$, respectively. Suppose either $\sup_{\boldsymbol{s}}\alpha(\boldsymbol{s})^2 < 8$ or $8 < \inf_{\boldsymbol{s}}\alpha(\boldsymbol{s})^2 \le \sup_{\boldsymbol{s}}\alpha(\boldsymbol{s})^2 < \infty$. Then SKLE$_{\alpha, b}$ is almost surely generated by a curve for $t \le \tau$ for every time $\tau$ where $\dist(F_\tau,\boldsymbol{s}) > 0$. Furthermore, if $\sup_{\boldsymbol{s}}\alpha(\boldsymbol{s})^2 < 4$, then SKLE$_{\alpha,b}$ is a simple curve for $t \le \tau$.
\end{proposition}

\subsection{Other prospective applications}

Applications such as Corollary \ref{imp-cor} was one of our initial motivation behind proving Theorem \ref{main-thm-1} and \ref{main-thm-2}. A related question was asked by A. Sep\'ulveda (in a private communication with the second author): Is Loewner chain driven by $|B_t|$ generated by a curve? His motivation behind posing this problem is to study SLEs with reflecting barriers. Such drivers naturally fall in the class of semimartingales. The main hurdle while adapting the proof of \cite{rs-sle} to such drivers is that in \cite{rs-sle} one has to do some exact computations of certain ``martingale observables". Such exact computations are not feasible (or at least very difficult) for $|B_t|$. Our premise behind proving Theorem \ref{main-thm-1} and \ref{main-thm-2} is to develop techniques that can bypass these exact computations and possibly apply to driving functions such as $|B_t|$. Even though our technique still falls short to handle $|B_t|$ (the local time is not of finite energy), we could handle $|B_t|^\alpha$ for $\alpha >3/2$ as shown in Corollary \ref{imp-cor}. 

A. Sep\'ulveda asked a yet another related question (in a private communication with the second author): what are scaling limits of gluing of two different statistical mechanics model, e.g. percolation model on the upper half plane with the Ising model on the lower half plane stitched together on the boundary in a certain fashion. While this problem is stated imprecisely, it is natural to expect that such scaling limits, if they exists, are SLEs with non-constant $\kappa$. Semimartingales drivers are a natural framework to include such cases.

Another outlook of this paper is to understand the problem of the existence of trace $\gamma$ more deeply by enlarging the class of drivers which do produce a curve. We expect that having more examples of drivers which do produce the trace will provide more insight into the existence problem. This in turn will help us to have a better understanding of another problem: when is the map $W \mapsto \gamma$ continuous? This is also linked to a related problem of \textit{Continuity in $\kappa$ of SLE$_\kappa$}, see \cite{vrw-continuity-kappa,fty-regularity-kappa} for some results in this direction.

%---------------------------------------------------
\bibliographystyle{alpha}

\end{document}